\titleformat{\subsection}[runin]
{\bfseries} {\thesubsection{.}}{0.15cm}{}[.]
\titleformat{\subsubsection}[runin]
{\em}{\thesubsubsection{.}}{0.15cm}{}[.]
\newtheorem{theorem}{Theorem}[section]
\newtheorem{lemma}[theorem]{Lemma}
\newtheorem{claim}[theorem]{Claim}
\newtheorem{corollary}[theorem]{Corollary}
\theoremstyle{definition}
\newtheorem{remark}[theorem]{Remark}
\numberwithin{equation}{section}
\numberwithin{figure}{section}
\newcommand\Cscr{\mathscr{C}}
\newcommand\Oscr{\mathscr{O}}
\newcommand\Gscr{\mathscr{G}}
\newcommand\C{\mathbb{C}}
\renewcommand\P{\mathbb{P}}
\newcommand\R{\mathbb{R}}
\newcommand\Z{\mathbb{Z}}
\newcommand\igot{\mathfrak{i}}
\renewcommand\igot{\mathfrak{i}}
\renewcommand\imath{\igot}
\newcommand\Flux{\mathrm{Flux}}
\newcommand\CMI{\mathrm{CMI}}
\def\Flux{\mathrm{Flux}}
\newcommand\CMIf{\mathrm{CMI_{full}}}
\newcommand\CMIfc{\mathrm{CMI_{full}^c}}
\newcommand\CMInf{\mathrm{CMI_{nf}}}
\newcommand\RNCf{\Re\mathrm{NC_{full}}}
\newcommand\RNCfc{\Re\mathrm{NC_{full}^c}}
\newcommand\RNC{\Re\mathrm{NC}}
\newcommand\Of{\mathscr O_{\mathrm{full}}}
\newcommand\Ofc{\mathscr O_{\mathrm{full}}^\mathrm{c}}
\newcommand\Onc{\mathscr O_{\mathrm{nc}}}
\newcommand\Oncc{\mathscr O_{\mathrm{nc}}^\mathrm{c}}
\newcommand\boldA{\mathbf A}
\begin{document}


\fancyhead[LO]{The space of Gauss maps of complete minimal surfaces}
\fancyhead[RE]{A.\ Alarc\'on and F.\ L\'arusson}
\fancyhead[RO,LE]{\thepage}

\thispagestyle{empty}



\begin{center}
{\bf \LARGE The space of Gauss maps
\\ \vspace{1mm}  
of complete minimal surfaces} 

\bigskip

%
%
{\large\bf Antonio Alarc\'on \; and \; Finnur L\'arusson}
\end{center}


%
%

\bigskip

\begin{quoting}[leftmargin={5mm}]
{\small
\noindent {\bf Abstract}\hspace*{0.1cm}  
The Gauss map of a conformal minimal immersion of an open Riemann surface $M$ into $\R^3$ is a meromorphic function on $M$.  In this paper, we prove that the Gauss map assignment, taking a full conformal minimal immersion $M\to\R^3$ to its Gauss map, is a Serre fibration. We then determine the homotopy type of the space of meromorphic functions on  $M$ that are the Gauss map of a complete full conformal minimal immersion, and show that it is the same as the homotopy type of the space of all continuous maps from $M$ to the 2-sphere.  
We obtain analogous results for the generalised Gauss map of conformal minimal immersions $M\to\R^n$ for arbitrary $n\ge 3$.
 

\noindent{\bf Keywords}\hspace*{0.1cm} 
Riemann surface, minimal surface, complete minimal surface, Gauss map, h-principle, Oka manifold, Serre fibration, weak homotopy equivalence


\noindent{\bf Mathematics Subject Classification (2020)}\hspace*{0.1cm} 
Primary 53A10. Secondary 30F99, 32E30, 32H02, 32Q56

\noindent{\bf Date}\hspace*{0.1cm} 
22 May 2022
}

\end{quoting}



\section{Introduction and main results}
\label{sec:intro}

\noindent
The Gauss map of a minimal surface in $\R^3$, parametrised as a conformal minimal immersion from an open Riemann surface $M$ into $\R^3$, may be viewed as a meromorphic function on $M$.  Bonnet first observed this fact in 1860 \cite{Bonnet1860} and Christoffel proved in 1867 \cite{Christoffel1867} that it characterises minimal surfaces in $\R^3$.  Via the Gauss map, complex-analytic methods have ever since played a major role in the classical theory of minimal surfaces.  The literature is vast.  We refer to \cite[Chapter 12]{Osserman1986} and \cite[Chapter 5]{AlarconForstnericLopez2021} for historical background and further references.  

It is a long-standing unsolved problem in the global theory of minimal surfaces to usefully characterise those meromorphic functions that are the Gauss map of a {\em complete\,} minimal surface.  Several decades of research on Picard-type theorems for Gauss maps of complete minimal surfaces culminated in the 1988 theorem of Fujimoto that such a map can omit at most four values in the Riemann sphere unless the surface is a plane \cite{Fujimoto1988JMSJ}.  This result is sharp.  Some further restrictions were given by Osserman \cite{Osserman1963} and by Weitsman and Xavier \cite{WeitsmanXavier1987}.  As an example in the other direction, Su and Li recently produced a sufficient Nevanlinna-theoretic condition for a meromorphic function on the plane or the disc to be the Gauss map of a complete minimal surface \cite{SuLi2019, SuLi2019b}.

In this paper, we take a new approach to the problem.  We investigate the space of meromorphic functions on $M$ that are the Gauss map of a complete minimal surface from a homotopy-theoretic viewpoint.  We determine the homotopy type of this space.  One of our main results is that the inclusion of this space in the space of all meromorphic functions on $M$ is a weak homotopy equivalence and, when $M$ has finite topological type, even a genuine homotopy equivalence.

It was discovered only recently that every meromorphic function on $M$ is the Gauss map of a conformal minimal immersion $M\to\R^3$ \cite{AlarconForstnericLopez2019JGEA}.  We show that the Gauss map assignment is not only surjective: it is in fact a Serre fibration.  This is a key ingredient in the proof of the main result described above, along with the strong parametric h-principle for complete minimal surfaces in our previous paper~\cite{AlarconLarusson2021}.

Our results extend to all higher dimensions.  To more precisely present them, we need to introduce some notation.  Let $M$ be an open Riemann surface, throughout assumed connected, and let $n\ge 3$. If $u=(u_1,\ldots,u_n): M \to \R^n$ is a conformal minimal immersion, then the $(1,0)$-differential $\partial u$ of $u$ determines the Kodaira-type holomorphic map $\Gscr(u)$ from $M$ into the hyperquadric 
\[ \bold Q^{n-2} = \big\{ [z_1:\cdots: z_n] \in \C\P^{n-1}\colon z_1^2+\cdots+z_n^2=0 \big\} \]
in $\C\P^{n-1}$ given by
\[
	\Gscr(u)(p) = [\partial u_1(p) : \cdots : \partial u_n(p)],\quad p\in M.
\]
The map $\Gscr(u)$ is called the {\em generalised Gauss map} of $u$, or, in this paper, simply the {\em Gauss map} of $u$.  A conformal immersion $M\to\R^n$ is minimal if and only if its Gauss map is holomorphic \cite[Theorem 1.1]{HoffmanOsserman1980}.\footnote{The Gauss map defined in \cite{HoffmanOsserman1980} is the conjugate of the Gauss map defined here.}

By \cite[Theorem 1.1]{AlarconForstnericLopez2019JGEA}, the Gauss map assignment $\Gscr : \CMI(M,\R^n) \to \Oscr(M,\bold Q^{n-2})$ is surjective.  Here, $\CMI(M,\R^n)$ and $\Oscr(M,\bold Q^{n-2})$ denote the spaces of all conformal minimal immersions $M\to\R^n$ and of all holomorphic maps $M\to \bold Q^{n-2}$, respectively.  The key to this and other recent applications of Oka theory in the theory of minimal surfaces is the fact that $\bold Q^{n-2}$ is an Oka manifold (see \cite{AlarconForstneric2014IM} or \cite[Example 5.6.2]{Forstneric2017E}).

A holomorphic map $M\to\C\P^{n-1}$ is said to be {\em full\,} if its image is not contained in any hyperplane; we denote by $\Of(M,\bold Q^{n-2})$ the open subspace of $\Oscr(M,\bold Q^{n-2})$ consisting of full maps.  A conformal minimal immersion $u\in \CMI(M,\R^n)$ is called {\em full\,} if its Gauss map $\Gscr(u)$ is full, and we denote by $\CMIf(M,\R^n)$ the open subspace of $\CMI(M,\R^n)$ consisting of all such immersions.  We endow these spaces with the compact-open topology.

The {\em flux\,} $\Flux(u)$ of an immersion $u\in\CMI(M,\R^n)$ is the cohomology class of its conjugate differential $d^cu=\imath(\bar\partial u-\partial u)$ in $H^1(M,\R^n)$.  The flux is naturally identified with the group homomorphism $\Flux(u):H_1(M,\Z)\to\R^n$ given by
\[
	\Flux(u)([C])=\int_C d^cu = -2\imath\int_C \partial u,\quad [C]\in H_1(M,\Z).
\]
We view the cohomology group $H^1(M,\C^n)$ as the de Rham group of $n$-tuples of holomorphic $1$-forms on $M$ modulo exact forms, endowed with the quotient topology induced from the compact-open topology.  The subgroup $H^1(M,\R^n)$ carries the subspace topology.

The first main result of this paper states that the Gauss map assignment for full conformal minimal immersions $\Gscr: \CMIf(M,\R^n) \to \Of(M,\bold Q^{n-2})$ is a Serre fibration, that is, satisfies the homotopy lifting property with respect to all CW-complexes. In fact, we prove the following stronger result.
%
%
\begin{theorem}\label{th:fibration}
If $M$ is an open Riemann surface and $n\ge 3$, then the map
\[
	(\Gscr,\Flux) : \CMIf(M,\R^n) \to \Of(M,\bold Q^{n-2})\times H^1(M,\R^n)
\]
is a Serre fibration.
\end{theorem}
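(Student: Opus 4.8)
The plan is to establish the homotopy lifting property of $(\Gscr,\Flux)$ with respect to cubes, which suffices for it to be a Serre fibration. So fix $m\ge 0$, a continuous map $h=(g,\mathfrak p):[0,1]^m\times[0,1]\to\Of(M,\bold Q^{n-2})\times H^1(M,\R^n)$ with components $g_{x,t},\mathfrak p_{x,t}$, and a lift $F_0:[0,1]^m\to\CMIf(M,\R^n)$ of $h(\cdot,0)$; the goal is a continuous lift $F:[0,1]^m\times[0,1]\to\CMIf(M,\R^n)$ of $h$ extending $F_0$. Fix a point $p_0\in M$ and a nowhere-vanishing holomorphic $1$-form $\theta_0$ on $M$ (such $\theta_0$ exists since $M$ is open), and let $\Ucal:=\{z\in\C^n\setminus\{0\}\colon z_1^2+\cdots+z_n^2=0\}$ be the punctured null cone. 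By the Weierstrass correspondence $u\mapsto(\di u/\theta_0,\,u(p_0))$ — a homeomorphism of $\CMIf(M,\R^n)$ onto the product of $\R^n$ with the space $\Ncal$ of holomorphic maps $f:M\to\Ucal$ such that $[f]:M\to\bold Q^{n-2}$ is full and $\Re\int_\delta f\theta_0=0$ for every loop $\delta$, the homeomorphism property holding for the compact-open topologies by interior elliptic estimates for harmonic functions — producing $F$ is equivalent to producing a continuous family $f=(f_{x,t})$ in $\Ncal$ with $f_{x,0}=\di F_0(x)/\theta_0$, with $[f_{x,t}]=g_{x,t}$, and with $\int_\delta f_{x,t}\,\theta_0=\tfrac{\imath}{2}\,\mathfrak p_{x,t}([\delta])$ for every loop $\delta$; indeed $F(x,t):=\big(p\mapsto F_0(x)(p_0)+2\Re\int_{p_0}^p f_{x,t}\theta_0\big)$ is then full because $g_{x,t}$ is, is conformal and immersed because $f_{x,t}$ maps into $\Ucal$, is well defined with flux $\mathfrak p_{x,t}$ because the periods of $f_{x,t}\theta_0$ are the prescribed purely imaginary vectors, and agrees with $F_0(x)$ at $t=0$.

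\emph{Step 1 (lifting the Gauss maps to the null cone)}. The map $\pi:\Ucal\to\bold Q^{n-2}$, $z\mapsto[z]$, is a holomorphic $\C^*$-bundle and $\Ucal$ is an Oka manifold \cite{AlarconForstneric2014IM}. We already have the lift $x\mapsto\di F_0(x)/\theta_0$ of $g$ over $[0,1]^m\times\{0\}$; since $[0,1]^m\times[0,1]$ deformation-retracts onto $[0,1]^m\times\{0\}$, this extends to a continuous lift of $g$ over all of $[0,1]^m\times[0,1]$ (compose with the retraction and use the homotopy lifting property of the fiber bundle $\pi$), and by the parametric Oka principle (cf.\ \cite{Forstneric2017E} and the proof of \cite[Theorem 1.1]{AlarconForstnericLopez2019JGEA}) this lift can be deformed, keeping it a lift of $g$ and fixing it over $[0,1]^m\times\{0\}$, to a continuous family $\psi=(\psi_{x,t})$ of \emph{holomorphic} maps $M\to\Ucal$ with $\psi_{x,0}=\di F_0(x)/\theta_0$ and $[\psi_{x,t}]=g_{x,t}$. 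Since $g_{x,t}$ is full, the components $\psi_{x,t}^{(1)},\dots,\psi_{x,t}^{(n)}$ are $\C$-linearly independent.

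\emph{Step 2 (matching the flux)}. Look for $f_{x,t}=e^{h_{x,t}}\psi_{x,t}$ with $h_{x,t}\in\Oscr(M,\C)$ and $h_{x,0}\equiv0$; since $\Ucal$ is a cone and $e^{h_{x,t}}$ is nowhere zero, such $f_{x,t}$ lies in $\Oscr(M,\Ucal)$ and satisfies $[f_{x,t}]=[\psi_{x,t}]=g_{x,t}$, so the only remaining requirement is the period condition $\int_{\delta_j}e^{h_{x,t}}\psi_{x,t}\theta_0=\tfrac{\imath}{2}\mathfrak p_{x,t}([\delta_j])$ over the members $\delta_j$ of a homology basis of $M$ (which holds at $t=0$ with $h_{x,0}=0$, by hypothesis on $F_0$). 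The key point is a period-domination fact: if $\omega_1,\dots,\omega_n$ are $\C$-linearly independent holomorphic $1$-forms on $M$ and $\delta_1,\dots,\delta_\ell$ are $\C$-homologically independent loops, then the $\C$-linear map $\Oscr(M,\C)\to(\C^n)^\ell$, $h\mapsto\big(\int_{\delta_j}h\,\omega_k\big)_{j,k}$, is surjective — its components are linearly independent functionals, as one sees by testing against holomorphic functions approximating bumps on disjoint arcs of the $\delta_j$ (Runge's theorem) together with the identity theorem for holomorphic $1$-forms. This lets one build, continuously in $(x,t)$ (cover the parameter cube by finitely many sets on each of which a fixed choice works, and patch by a partition of unity on the parameter), a period-dominating spray $\zeta\mapsto h_{x,t}^\zeta:=\zeta_1 h^{(1)}_{x,t}+\cdots+\zeta_N h^{(N)}_{x,t}$ with $h^0_{x,t}=0$ for which $\zeta\mapsto$ (the periods of $e^{h_{x,t}^\zeta}\psi_{x,t}\theta_0$ over the relevant loops) is submersive at $\zeta=0$; the parametric implicit function theorem — applied after subdividing $[0,1]$ into short intervals and restarting at each subdivision point, with uniformity in $x$ from compactness — then yields a continuous $\zeta=\zeta_{x,t}$ with $\zeta_{x,0}=0$ solving the period condition. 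When $M$ has infinite topology one runs this recursively over a Runge exhaustion $M_1\Subset M_2\Subset\cdots$, at stage $k$ arranging the periods over the finitely many basis loops contained in $M_k$ and using classical Runge approximation of functions to pass from $M_k$ to $M_{k+1}$ with estimates ensuring convergence on $M$.

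Putting $f_{x,t}=e^{h_{x,t}^{\zeta_{x,t}}}\psi_{x,t}$ and defining $F$ as above completes the lift. I expect the main obstacle to be Step 2 carried out parametrically and over non-compact $M$ simultaneously: one must keep the period correction continuous over the whole parameter cube while recursing over the exhaustion and never disturbing periods arranged at earlier stages — precisely the kind of delicate quantitative bookkeeping developed in \cite{AlarconForstnericLopez2021} and \cite{AlarconLarusson2021} — whereas Step 1 is a soft consequence of Oka theory.
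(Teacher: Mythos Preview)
Your overall architecture matches the paper's exactly: factor $(\Gscr,\Flux)$ through the lift to the punctured null quadric $\boldA_*$ (your Step~1), then correct periods by multiplying the lift by a nowhere-vanishing holomorphic scalar (your Step~2). Step~1 is fine and is precisely what the paper does, citing \cite[Lemma 5.1]{AlarconForstnericLopez2019JGEA} for the fibration $\pi_*:\Of(M,\boldA_*)\to\Of(M,\bold Q^{n-2})$.

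The gap is in Step~2. Your argument rests on the implicit function theorem applied to the spray $\zeta\mapsto e^{h_{x,t}^\zeta}\psi_{x,t}$, together with ``subdividing $[0,1]$ into short intervals and restarting at each subdivision point.'' You correctly show that the linearised period map is surjective at every base point. But surjectivity of the derivative everywhere does \emph{not} imply that the subdivision terminates: the IFT step size at the $j$th restart depends on the norm of a right inverse at the current base point $h_{x,t_j}$, and you give no a~priori bound keeping those base points in a set where this norm is controlled. Equivalently, you have shown the period map $h\mapsto\big(\int_{\delta_j}e^h\psi_{x,t}\theta\big)_j$ is a submersion, but not that it is a fibration, so path-lifting (let alone higher-cube lifting) over $t\in[0,1]$ is not justified. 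The infinite-topology recursion inherits the same difficulty, compounded by the need, at each critical step of the exhaustion, to realise an \emph{arbitrary} (possibly far-away) target period over the new loop while staying nowhere-zero and barely moving on the previously arranged region.

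The paper fills exactly this gap with its main technical Theorem~\ref{th:main-technical-theorem}, which is what the proof of Theorem~\ref{th:fibration} actually invokes. That theorem is proved by exhaustion of $M$, and its key innovation over the one-parameter case in \cite[Theorem~4.1]{AlarconForstnericLopez2019JGEA} is the critical-case Lemma~\ref{lem:critical-case}: when a new loop appears, one extends the multiplier over the new arc so as to hit \emph{any} prescribed period in $\C^n$ while remaining $\C^*$-valued, continuously in the full parameter cube $[0,1]^k$. This is achieved via Corollary~\ref{co:R-full-k}, whose proof combines the standard $\C$-full period-dominating spray with an $\R$-fullness argument (Lemma~\ref{lem:R-full-k}) that produces a multiplier of the form $1+\imath x$ with $x$ real --- hence automatically nowhere-zero --- realising an arbitrary (large) period correction. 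This is precisely the ``large correction while nowhere-vanishing'' step that your IFT scheme cannot supply. Your closing acknowledgment that Step~2 needs ``delicate quantitative bookkeeping'' is apt, but the references you give do not contain the needed multi-parameter statement; it is new to this paper.
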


The fact that the flux map $\CMIf(M,\R^n) \to H^1(M,\R^n)$ is a Serre fibration was already known (\cite[Theorem 6.1(a)]{AlarconLarusson2021} is stated for complete immersions, but in its proof completeness may be ignored).  We prove Theorem \ref{th:fibration} in Section \ref{sec:proofs} as a consequence of the main technical result of the paper, Theorem \ref{th:main-technical-theorem}, which is stated and proved in Section~\ref{sec:technical}.  Our proofs rely on the maps we are working with being full.  The key applications of fullness, in the proofs of Lemma \ref{lem:critical-case} and Corollary \ref{co:R-full-k}, have been highlighted for the reader's convenience.

Our second main result is a contribution to the open problem of determining which holomorphic maps $M\to \bold Q^{n-2}$ are Gauss maps of complete conformal minimal immersions.  As already mentioned for $n=3$, the study of the value distribution properties of the Gauss map of complete minimal surfaces in $\R^n$ for $n\ge 3$ has been one of the main foci of interest in this theory.  Some restrictions are known.  Ru proved that the Gaussian image of a complete nonflat minimal surface in $\R^n$ cannot omit more than $n(n+1)/2$ hyperplanes in $\C\P^{n-1}$ in general position \cite{Ru1991JDG}.  This is sharp whenever $n$ is odd or at most $17$ \cite{Fujimoto1988SRKU}. The same result for full minimal surfaces was previously obtained by Fujimoto \cite{Fujimoto1983JMSJ, Fujimoto1990JDG}.  Let $\CMIfc(M, \R^n)$ denote the subspace of $\CMIf(M,\R^n)$ of complete conformal minimal immersions.  It follows from the parametric h-principle that is the main result of our paper \cite{AlarconLarusson2021} that the inclusion $\CMIfc(M, \R^n) \hookrightarrow \CMIf(M,\R^n)$ is a weak homotopy equivalence with dense image.  Let $\Ofc(M,\bold Q^{n-2}) = \Gscr(\CMIfc(M, \R^n))$.

\begin{theorem}\label{th:inclusion}
Let $M$ be an open Riemann surface and $n\ge 3$.

{\rm (a)}  The inclusion $\Ofc(M,\bold Q^{n-2}) \hookrightarrow \Of(M,\bold Q^{n-2})$ is a weak homotopy equi\-valence.

{\rm (b)}  If $M$ has finite topological type,\footnote{We recall the following equivalent definitions of finite topological type:  the fundamental group of $M$ is finitely generated; $M$ has the homotopy type of a finite bouquet of circles; $M$ can be obtained from a compact Riemann surface by removing a finite number of mutually disjoint points and closed discs; $M$ has a strictly subharmonic Morse exhaustion with finitely many critical points.} then the inclusion is a homotopy equivalence.

{\rm (c)}  The inclusion $\Ofc(M,\bold Q^{n-2}) \hookrightarrow \Cscr(M,\bold Q^{n-2})$ is a weak homotopy equivalence, and, if $M$ has finite topological type, a homotopy equivalence.
\end{theorem}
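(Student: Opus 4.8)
The plan is to derive all three statements from Theorem~\ref{th:fibration} together with the relative parametric h-principle for complete conformal minimal immersions of \cite{AlarconLarusson2021}. For (a), composing the Serre fibration $(\Gscr,\Flux)$ of Theorem~\ref{th:fibration} with the projection of $\Of(M,\bold Q^{n-2})\times H^1(M,\R^n)$ onto its first factor --- a trivial fibre bundle, hence a Serre fibration --- shows that $\Gscr\colon\CMIf(M,\R^n)\to\Of(M,\bold Q^{n-2})$ is a Serre fibration. One then proves that its restriction $\Gscr\colon\CMIfc(M,\R^n)\to\Ofc(M,\bold Q^{n-2})$ is again a Serre fibration, and that for each $g\in\Ofc(M,\bold Q^{n-2})$ the inclusion of fibres
\[
	\{u\in\CMIfc(M,\R^n):\Gscr(u)=g\}\ \hookrightarrow\ \{u\in\CMIf(M,\R^n):\Gscr(u)=g\}
\]
is a weak homotopy equivalence.

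Both assertions are to be deduced from the following form of the h-principle of \cite{AlarconLarusson2021}: \emph{a family of full conformal minimal immersions of $M$, parametrised by a CW complex and complete over a given subcomplex, can be deformed, relative to that subcomplex and keeping the Gauss map and the flux fixed throughout, to a family of complete full conformal minimal immersions.} Indeed, given a homotopy $h\colon K\times[0,1]\to\Ofc(M,\bold Q^{n-2})$ that lifts over $K\times\{0\}$ to $\CMIfc(M,\R^n)$, one first extends the flux to a homotopy $\phi$ over $K\times[0,1]$, lifts $(h,\phi)$ into $\CMIf(M,\R^n)$ by Theorem~\ref{th:fibration}, and then applies this h-principle to push the lift into $\CMIfc(M,\R^n)$, relative to $K\times\{0\}$ and without changing $h$; the statement about fibres is the case in which the Gauss map is held constant equal to $g$.

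Granting this, one has two Serre fibrations --- $\Gscr$ over $\Of(M,\bold Q^{n-2})$ and its restriction over $\Ofc(M,\bold Q^{n-2})$ --- and the map between them induced by the inclusions $\CMIfc(M,\R^n)\hookrightarrow\CMIf(M,\R^n)$ and $\Ofc(M,\bold Q^{n-2})\hookrightarrow\Of(M,\bold Q^{n-2})$. The map of total spaces is a weak homotopy equivalence by \cite{AlarconLarusson2021} and the maps of fibres are weak homotopy equivalences by the previous paragraph, so comparing the two long exact sequences of homotopy groups and applying the five lemma shows that the map of base spaces is a weak homotopy equivalence; surjectivity of $\Gscr$ and density of $\CMIfc(M,\R^n)$ in $\CMIf(M,\R^n)$ take care of $\pi_0$ and of the choice of base point. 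This proves (a). The main obstacle, I expect, is exactly the h-principle displayed above: that completeness can be achieved parametrically while holding the Gauss map fixed, equivalently that the restricted map $\Gscr$ is a Serre fibration. This asks for the approximation arguments of \cite{AlarconLarusson2021} to be performed inside the fibres of $\Gscr$ --- modifying only the ``height-differential'' part of the Weierstrass data, subject to finitely many period constraints --- and it is here that Theorem~\ref{th:fibration}, and behind it the main technical result of the paper, should supply the flexibility required to interleave this with deformations of the Gauss map itself.

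For (b), when $M$ has finite topological type all the spaces occurring above have the homotopy type of CW complexes: $\Of(M,\bold Q^{n-2})$ is an open subset of $\Oscr(M,\bold Q^{n-2})$, which is then an ANR, while the immersion spaces and $\Ofc(M,\bold Q^{n-2})$ are handled by the finite-type strengthening of the h-principle of \cite{AlarconLarusson2021}, which in particular makes $\CMIfc(M,\R^n)\hookrightarrow\CMIf(M,\R^n)$ a genuine homotopy equivalence. Re-running the argument of (a) with genuine homotopy equivalences in place of weak ones --- upgrading the comparison of the two fibrations by Dold's theorem, or applying Whitehead's theorem at the end --- promotes the conclusion of (a) to a homotopy equivalence. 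For (c), factor the inclusion as $\Ofc(M,\bold Q^{n-2})\hookrightarrow\Of(M,\bold Q^{n-2})\hookrightarrow\Oscr(M,\bold Q^{n-2})\hookrightarrow\Cscr(M,\bold Q^{n-2})$. The first map is a weak homotopy equivalence by (a), a homotopy equivalence under the finite-type hypothesis by (b). The third map is a weak homotopy equivalence, and a homotopy equivalence when $M$ has finite topological type, by the parametric Oka principle, since $M$ is Stein and $\bold Q^{n-2}$ is an Oka manifold. The second map, the inclusion of full maps into all holomorphic maps, is a weak homotopy equivalence because the non-full maps --- those factoring through some hyperplane section of $\bold Q^{n-2}$ --- form a closed subset of infinite codimension, and the Oka property of $\bold Q^{n-2}$ furnishes enough holomorphic deformations to push any finite-parameter family off this subset; it is a homotopy equivalence for finite-type $M$ by the same ANR/Whitehead argument. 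Composing the three maps proves (c).
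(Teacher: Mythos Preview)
The key step in your approach to (a) is the h-principle you display: that a family of full conformal minimal immersions, complete over a subcomplex, can be deformed relative to that subcomplex to complete immersions \emph{while keeping the Gauss map fixed throughout}. This is equivalent to the assertion that the restricted map $\Gscr^c:\CMIfc(M,\R^n)\to\Ofc(M,\bold Q^{n-2})$ is a Serre fibration, and you yourself flag it as the main obstacle. But the paper explicitly records this as an open question (see the Remark following Corollary~\ref{co:dense}). The h-principle of \cite{AlarconLarusson2021} deforms to complete immersions with approximation on compacts and control of the flux, not with the Gauss map held fixed; and Theorem~\ref{th:main-technical-theorem} produces the multiplier $\varphi$ needed to realise prescribed periods over a family of Gauss maps, but says nothing about forcing the resulting immersion to be complete. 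So the ingredient your five-lemma comparison of fibrations needs is simply not available.

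The paper sidesteps this entirely. Its route to (a) is a direct h-principle for the pair $\Ofc\subset\Of$ (Theorem~\ref{th:h-principle}): given a family $G:P\to\Of$ with $P$ a contractible finite CW-complex, lift it by the fibration $\Gscr$ of Theorem~\ref{th:fibration} to $u:P\to\CMIf$, apply \cite[Theorem 6.1(a)]{AlarconLarusson2021} to deform $u$ to complete immersions $u^t$ relative to $Q$, and push down by $\Gscr$ to obtain $G^t=\Gscr(u^t)$. The Gauss map is \emph{not} held fixed while passing to completeness; it is allowed to move, and the resulting deformation of $G$ into $\Ofc$, relative to $Q$, is exactly what the standard ball/sphere argument for weak homotopy equivalence requires. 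No comparison of long exact sequences, no fibre analysis, and no need for $\Gscr^c$ to be a fibration. Your treatment of (b) and (c) is in line with the paper's: for (b) one shows both spaces are ANRs (using \cite[Theorem 9]{Larusson2015PAMS} for $\Of$ and, via Theorem~\ref{th:h-principle} and \cite[Proposition 5.2]{AlarconLarusson2021}, for $\Ofc$) and invokes Whitehead; for (c) one factors through $\Of$ and $\Oscr$ and uses a general-position argument together with the basic Oka principle.
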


Part {\rm (a)} means that the inclusion induces a bijection of path components $\pi_0(\Ofc(M,\bold Q^{n-2}))\to \pi_0(\Of(M,\bold Q^{n-2}))$ and an isomorphism of homotopy groups
\[
	\pi_k(\Ofc(M,\bold Q^{n-2}),g)\to \pi_k(\Of(M,\bold Q^{n-2}),g)
\]
for every integer $k\ge 1$ and every base point $g=\Gscr(u)\in \Ofc(M,\bold Q^{n-2})$, with $u\in \CMIfc(M, \R^n)$.  By (b), when $M$ is of finite topological type, there is a homotopy inverse $\xi:\Of(M,\bold Q^{n-2}) \to \Ofc(M,\bold Q^{n-2})$ to the inclusion.  This means that there is a way to associate to every map $g\in \Of(M,\bold Q^{n-2})$ a map $\xi(g)\in \Ofc(M,\bold Q^{n-2})$ that is homotopic to $g$.  Moreover, if $g\in\Ofc(M,\bold Q^{n-2})$ to begin with, then there is such a homotopy through maps in $\Ofc(M,\bold Q^{n-2})$.  The main point is that $\xi(g)$ and the homotopies depend continuously on $g$.  Finally, part (c) reduces the determination of the homotopy type of $\Ofc(M,\bold Q^{n-2})$ to a purely topological problem.

The main ingredients in the proof of the theorem, which is given in Section \ref{sec:proofs}, are the parametric h-principle in \cite{AlarconLarusson2021}, the result that the Gauss map assignment is a fibration (Theorem \ref{th:fibration}), and, for part (b), the theory of absolute neighbourhood retracts (ANRs) in the category of metric spaces and \cite[Theorem 9]{Larusson2015PAMS}, which uses Oka theory to show that certain spaces of holomorphic maps are ANRs.

We observe that since the space $\CMIfc(M, \R^n)$ is dense in $\CMIf(M,\R^n)$ (\cite[Theorem 7.1]{AlarconForstnericLopez2016MZ}; the case of $n=3$ follows from \cite[Theorem 5.6]{AlarconLopez2012JDG}) and the map $\Gscr:\CMIf(M, \R^n) \to \Of(M,\bold Q^{n-2})$ is surjective \cite[Theorem 1.1]{AlarconForstnericLopez2019JGEA}, $\Ofc(M,\bold Q^{n-2})$ is dense in $\Of(M,\bold Q^{n-2})$.  As a consequence of Theorem \ref{th:fibration}, $\Ofc(M,\bold Q^{n-2})$ is dense in $\Of(M,\bold Q^{n-2})$ in the following stronger sense. See \cite[Corollary 1.3]{AlarconLarusson2021} for an analogous result for the subspace $\CMIfc(M, \R^n)$ of $\CMIf(M,\R^n)$.
 
\begin{corollary}\label{co:dense}
If $M$ is an open Riemann surface, $P$ is a contractible finite CW-complex, and $Q\subset P$ is a retract of $P$, then every continuous map $Q\to \Of(M,\bold Q^{n-2})$, $n\ge 3$, extends to a continuous map $P\to \Of(M,\bold Q^{n-2})$ that takes $P\setminus Q$ into $\Ofc(M,\bold Q^{n-2})$.
\end{corollary}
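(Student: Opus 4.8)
The plan is to deduce Corollary \ref{co:dense} from the analogous density statement for conformal minimal immersions, \cite[Corollary 1.3]{AlarconLarusson2021}, by lifting the given map through the Gauss map assignment. The point that makes this work is that $\Gscr\colon\CMIf(M,\R^n)\to\Of(M,\bold Q^{n-2})$ is a \emph{surjective Serre fibration}: surjectivity is a consequence of \cite[Theorem 1.1]{AlarconForstnericLopez2019JGEA} (if $\Gscr(u)=g$ and $g$ is full, then $u$ is full by definition), and the Serre fibration property follows from Theorem \ref{th:fibration} by composing $(\Gscr,\Flux)$ with the projection $\Of(M,\bold Q^{n-2})\times H^1(M,\R^n)\to\Of(M,\bold Q^{n-2})$, which is a Hurewicz, hence Serre, fibration.

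Given a continuous map $f\colon Q\to\Of(M,\bold Q^{n-2})$, I would first fix a retraction $\rho\colon P\to Q$ and pass to the continuous extension $g=f\circ\rho\colon P\to\Of(M,\bold Q^{n-2})$. Since $P$ is a contractible finite CW-complex, $g$ is null-homotopic; lifting a constant map through $\Gscr$ by surjectivity and then applying the homotopy lifting property of the Serre fibration $\Gscr$ along a contraction of $g$ to it, I would obtain a continuous lift $\tilde g\colon P\to\CMIf(M,\R^n)$ with $\Gscr\circ\tilde g=g$. Next I would apply \cite[Corollary 1.3]{AlarconLarusson2021} to $\tilde g|_Q\colon Q\to\CMIf(M,\R^n)$ --- legitimate because $Q$ is a retract of the contractible finite CW-complex $P$ --- to obtain a continuous extension $\hat g\colon P\to\CMIf(M,\R^n)$ of $\tilde g|_Q$ that carries $P\setminus Q$ into $\CMIfc(M,\R^n)$. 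The map sought in the corollary is then $F=\Gscr\circ\hat g$: restricted to $Q$ it equals $\Gscr\circ\tilde g|_Q=g|_Q=f$ (as $\rho$ is a retraction), and it sends $P\setminus Q$ into $\Gscr(\CMIfc(M,\R^n))=\Ofc(M,\bold Q^{n-2})$.

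I expect the only step with genuine content to be the construction of the lift $\tilde g$, which rests squarely on Theorem \ref{th:fibration}; everything else is formal, using retractions, contractibility of $P$, and the already-established density result for conformal minimal immersions. The subtlety worth flagging is that the lifting of $g$ through $\Gscr$ uses both ingredients --- contractibility of $P$, so that $g$ is null-homotopic and the obstructions to lifting vanish, and surjectivity of $\Gscr$, so that a constant lift exists from which to start the homotopy lifting --- and neither the fibration property nor surjectivity alone would be enough.
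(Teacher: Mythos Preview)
Your proof is correct and follows essentially the same route as the paper. The paper derives Corollary \ref{co:dense} from an intermediate h-principle (Theorem \ref{th:h-principle}), whose proof does precisely what you do: extend via the retraction, lift through the Serre fibration $\Gscr$ using contractibility of $P$, invoke the completeness result from \cite{AlarconLarusson2021} at the level of conformal minimal immersions (the paper cites Theorem 6.1(a) there, of which Corollary 1.3 is the packaged consequence), and push back down by $\Gscr$. Your argument is slightly more direct in that it bypasses the stronger homotopy statement of Theorem \ref{th:h-principle} and goes straight to the extension statement, but the substance is identical.
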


\begin{remark} Consider the commuting square
\[ \xymatrix{
	\CMIfc(M, \R^n)  \ar@{^{(}->}[r]^i \ar[d]^{\Gscr^c}  &  \CMIf(M,\R^n) \ar[d]^\Gscr \\ 
	\Ofc(M,\bold Q^{n-2})   \ar@{^{(}->}[r]^j     &  \Of(M,\bold Q^{n-2})  } \]
where $\Gscr^c$ is the restriction of $\Gscr$ to $\CMIfc(M, \R^n)$, that is, the Gauss map assignment for full complete conformal minimal immersions. We know that the inclusions $i$ and $j$ are weak homotopy equivalences by \cite[Theorem 6.1(c)]{AlarconLarusson2021} and Theorem \ref{th:inclusion}(a), respectively, while $\Gscr$ is a fibration by Theorem \ref{th:fibration}. It remains an open question whether $\Gscr^c$ is a fibration as well.
\end{remark}

In the classical case of $n=3$, a conformal minimal immersion $u$ is full if and only if it is nonflat, that is, its image does not lie in an affine 2-plane in $\R^3$.  Equivalently, the Gauss map of $u$ is not constant.  Also, $\bold Q^1$ may be identified with the Riemann sphere $\P=\C\P^1$ and the Gauss map of a conformal minimal immersion $u=(u_1,u_2,u_3):M\to\R^3$ viewed, via the stereographic projection, as the holomorphic function $M\to\P$ given by
\[
	\Gscr(u)=\frac{\partial u_3}{\partial u_1-\imath \partial u_2},
\]
often called the {\em complex Gauss map} of $u$ (see \cite[Section 2.5]{AlarconForstnericLopez2021} for more details).  In the following corollary, the subscript nf stands for nonflat and nc for nonconstant.

\begin{corollary}\label{cor:dim-three}
Let $M$ be an open Riemann surface.

{\rm(a)}  $(\Gscr,\Flux) : \CMInf(M,\R^3) \to \Onc(M,\P)\times H^1(M,\R^3)$ is a Serre fibration.

{\rm(b)}  The inclusion $\Oncc(M,\P) \hookrightarrow \Onc(M,\P)$ is a weak homotopy equivalence, whose image is dense in the strong sense of Corollary \ref{co:dense}.  The inclusion $\Oncc(M,\P) \hookrightarrow \Cscr(M,\P)$ is a weak homotopy equivalence.  If $M$ has finite topological type, then the inclusions are homotopy equivalences.
\end{corollary}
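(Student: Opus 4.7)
The plan is to deduce the corollary from the $n$-dimensional results already established in the paper by specialising to $n=3$ and invoking the classical identification of the Gauss geometry of $\R^3$ with the Riemann sphere. The key observation is that the smooth conic $\bold Q^1\subset \C\P^2$ is biholomorphic to $\P=\C\P^1$ via the stereographic projection
\[
	[z_1:z_2:z_3] \longmapsto \frac{z_3}{z_1-\imath z_2},
\]
and that under this identification the generalised Gauss map $[\partial u_1:\partial u_2:\partial u_3]$ of $u=(u_1,u_2,u_3)$ becomes the classical complex Gauss map $\partial u_3/(\partial u_1-\imath \partial u_2)$ recalled in the excerpt. The first step is to record this biholomorphism as a homeomorphism of mapping spaces $\Oscr(M,\bold Q^1)\cong \Oscr(M,\P)$ that is compatible with the compact-open topology on both sides and intertwines the two notions of Gauss map.

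The second step is to reconcile the notions of fullness and nonflatness. A hyperplane in $\C\P^2$ cuts $\bold Q^1$ in a divisor of degree $2$ and so meets the connected curve $\bold Q^1$ in at most two points. Since the image of a holomorphic map from the connected surface $M$ into $\bold Q^1$ is connected, being contained in a hyperplane is equivalent to being constant. Hence $\Of(M,\bold Q^1)$ corresponds to $\Onc(M,\P)$ under the identification above. A conformal minimal immersion $u\in\CMI(M,\R^3)$ is nonflat precisely when its classical Gauss map is nonconstant, so $\CMIf(M,\R^3)=\CMInf(M,\R^3)$, and likewise $\Ofc(M,\bold Q^1)$ corresponds to $\Oncc(M,\P)$.

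With these identifications in place, part (a) is Theorem \ref{th:fibration} applied with $n=3$. The three assertions in part (b) are respectively Theorem \ref{th:inclusion}(a), Theorem \ref{th:inclusion}(c), and Theorem \ref{th:inclusion}(b) applied with $n=3$; the density in the strong sense of Corollary \ref{co:dense} is Corollary \ref{co:dense} itself with $n=3$. Once the translation between $\bold Q^1$ and $\P$ is set up, no further work is required, so the only substantive content of the proof is the verification that fullness of a map $M\to\bold Q^1$ is the same as nonconstancy; this is the main (and very mild) obstacle.
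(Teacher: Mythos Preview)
Your proposal is correct and matches the paper's approach: the corollary is stated immediately after the paragraph that records exactly the identifications you spell out (that $\bold Q^1\cong\P$ via stereographic projection, that the generalised Gauss map becomes the complex Gauss map, and that full $=$ nonflat $=$ nonconstant Gauss map when $n=3$), and no separate proof is given. Your Bezout-type justification that a non-full map $M\to\bold Q^1$ must be constant is a nice explicit supplement to what the paper leaves implicit.
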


\begin{remark}\label{rem:null-curves}
Let us briefly indicate how our results can be adapted to null curves.  A conformal minimal immersion $u:M\to\R^n$ has a harmonic conjugate $v$ if and only if its flux vanishes.  Then the holomorphic map $u+\imath v:M\to\C^n$ is a so-called null curve.  The subspace of $\CMI(M,\R^n)$ of immersions with vanishing flux, that is, real parts of holomorphic null curves, is denoted $\RNC(M,\C^n)$.  By \cite[Theorem 1.1]{AlarconForstnericLopez2019JGEA}, the Gauss map assignment $\Gscr:\RNCf(M,\C^n) \to \Of(M, \bold Q^{n-2})$ is surjective, and Theorem \ref{th:fibration} implies that it is a fibration (where the subscripts have the usual meaning).  

The proof of Theorem \ref{th:inclusion}(a) is then easily adapted, using the control on the flux provided by \cite[Theorem 6.1(a)]{AlarconLarusson2021}, to show that the inclusion into $\Of(M, \bold Q^{n-2})$ of the space $\Gscr(\RNCfc(M, \R^n))$ of full holomorphic maps $M\to\bold Q^{n-2}$ that are the Gauss map of the real part of a complete holomorphic null curve is a weak homotopy equivalence.  Moreover, the inclusion has dense image (using the analogue for full immersions of \cite[Corollary 1.3]{AlarconLarusson2021}, which is an immediate consequence of \cite[Theorem 6.1(a)]{AlarconLarusson2021}).  We also see that the inclusion $\Gscr(\RNCfc(M, \R^n)) \hookrightarrow \Ofc(M, \bold Q^{n-2})$ is a weak homotopy equivalence.  It is an open question whether the two spaces are in fact the same.

More generally, if we fix $\alpha\in H^1(M, \R^n)$, the corresponding results hold for conformal minimal immersions with flux $\alpha$.
\end{remark}

Theorem \ref{th:fibration} implies that the space $\Gscr^{-1}(g)$ of full conformal minimal immersions $M\to \R^n$ with fixed Gauss map $g$ has the same weak homotopy type for all $g\in\Of(M, \bold Q^{n-2})$.  In Section \ref{sec:fibre} we determine this homotopy type.

\begin{theorem}\label{th:fibre}
Let $M$ be an open Riemann surface and $n\ge 3$.  The fibre of the Gauss map assignment $\Gscr: \CMIf(M,\R^n) \to \Of(M,\bold Q^{n-2})$ has the weak homo\-topy type of a countably infinite disjoint union of circles, unless $M$ is the plane or the disc, in which case the fibre has the weak homotopy type of a circle.
\end{theorem}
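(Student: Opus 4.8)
The plan is to reduce the fibre to a space of nowhere‑vanishing holomorphic functions, and then settle the resulting question with period‑dominating sprays, whose construction rests on fullness exactly as in the proof of Theorem~\ref{th:fibration}.

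First I would pin down a model for the fibre. Fix $g\in\Of(M,\bold Q^{n-2})$; by \cite[Theorem~1.1]{AlarconForstnericLopez2019JGEA} the fibre $\Gscr^{-1}(g)$ is nonempty, so choose $u_0$ in it. For any $u\in\Gscr^{-1}(g)$ the holomorphic $1$-forms $\partial u$ and $\partial u_0$ have the same projectivisation $g$ and neither vanishes, so $f:=\partial u/\partial u_0$ is a well‑defined element of $\Oscr^*(M)$, and $u$ is recovered from $f$ together with $u(p_0)\in\R^n$. The assignment $u\mapsto(f,u(p_0))$ is then a homeomorphism of $\Gscr^{-1}(g)$ onto $Z_g\times\R^n$, where $Z_g$ is the set of $f\in\Oscr^*(M)$ for which the real $1$-form $\Re(f\partial u_0)$ is exact (equivalently, for which $2\Re\int f\partial u_0$ is a well‑defined map $M\to\R^n$; that map is automatically a full conformal minimal immersion with Gauss map $g$). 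Thus $\Gscr^{-1}(g)$ is homotopy equivalent to $Z_g$, and everything reduces to the weak homotopy type of $Z_g$.

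Second I would recall the homotopy type of the ambient group. From the exponential sheaf sequence and the vanishing $H^1(M,\mathcal O_M)=0$ one gets that $\Oscr^*(M)$ is a topological group whose group of components is $H^1(M,\Z)$, via the winding‑number homomorphism, and whose identity component is $\exp\Oscr(M)$; since $\Oscr(M)$ is a contractible Fréchet space and the kernel of $\exp$ is $2\pi\imath\Z$, that component is a $K(\Z,1)$, hence weakly equivalent to a circle, and so is every component. Because the winding class is locally constant, $Z_g$ meets each component of $\Oscr^*(M)$ in a union of its own components, so the weak homotopy type of $Z_g$ is known once we know the weak homotopy type of $Z_g$ intersected with a single component of $\Oscr^*(M)$ and the cardinality of the set of winding classes that $Z_g$ realises. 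For the first: fixing $f_0\in Z_g$, an $f\in Z_g$ in the component of $f_0$ is $f_0e^h$ with $h$ in the subset $G\subset\Oscr(M)$ on which the period map $h\mapsto[\Re(f_0e^h\partial u_0)]\in H^1(M,\R^n)$ vanishes. Using holomorphic period‑dominating sprays, constructed from the fullness of $g$ precisely as in the proofs of Lemma~\ref{lem:critical-case} and Corollary~\ref{co:R-full-k} (and combined with a Mergelyan‑type exhaustion of $M$ when the topology is infinite, along the lines of \cite{AlarconLarusson2021}), this period map is a Serre fibration onto a contractible set, so $G$ is weakly contractible; hence that component of $Z_g$, being $G/2\pi\imath\Z$, is a $K(\Z,1)$ and has the weak homotopy type of a circle. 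The same sprays show the set of realised winding classes is $\{0\}$ when $M$ is the plane or the disc (there are no periods to correct, and $H^1(M,\Z)=0$), and is countably infinite otherwise; putting the pieces together, $Z_g$ — and therefore $\Gscr^{-1}(g)$ — has the weak homotopy type of a disjoint union of circles indexed by a countably infinite set, or, if $M$ is the plane or the disc, of a single circle.

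I expect the last point — the exact count of components of $Z_g$ — to be the main obstacle. Exhibiting infinitely many realised winding classes when $M$ is not simply connected is routine (finitely supported classes are realised by correcting the finitely many relevant periods with one spray), but controlling the realised classes so that, for $M$ of infinite topological type, only countably many occur, and verifying weak contractibility of the $h$-spaces $G$ uniformly in $f_0$, is where the period‑dominating spray machinery — and, crucially, fullness — must be deployed with care.
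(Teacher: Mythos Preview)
Your approach is sound in outline but takes a genuinely different route from the paper's. You identify the fibre explicitly as $Z_g\times\R^n$ with $Z_g\subset\Oscr^*(M)$ and then argue directly, via period-dominating sprays, that the inclusion $Z_g\hookrightarrow\Oscr^*(M)$ is a weak homotopy equivalence, analysing components and $\pi_1$ by hand. The paper instead exploits the factorisation $\Gscr=\pi_*\circ\psi$ with $\psi(u)=2\partial u/\theta$: since $\pi_*$ is a Serre fibration with fibre $F_0=\Oscr(M,\C^*)$ by \cite[Lemma~5.1]{AlarconForstnericLopez2019JGEA}, and $\psi$ is a weak homotopy equivalence by the parametric h-principle of \cite[Theorem~5.3]{ForstnericLarusson2019CAG} (adapted to full maps), a five-lemma comparison of the long exact homotopy sequences of the fibrations $\Gscr$ and $\pi_*$ forces $\psi$ to restrict to a weak equivalence $F\to F_0$, and the homotopy type of $\Oscr(M,\C^*)$ is then read off from the Oka principle. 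The paper's argument is considerably shorter because all the spray work is already packaged into the cited h-principle; your approach is more concrete and self-contained, but the Serre-fibration claim for your period map $h\mapsto[\Re(f_0e^h\partial u_0)]$ is essentially that h-principle reproved in a special case, so you are redoing work the paper simply quotes. One small correction: your concern that for $M$ of infinite topological type ``only countably many'' winding classes be realised is misplaced, since $\pi_0(\Oscr^*(M))\cong H^1(M,\Z)$ is a free abelian group on at most countably many generators and hence always countable; the only real point is that $Z_g$ meets \emph{every} component of $\Oscr^*(M)$, and that follows directly from Theorem~\ref{th:main-technical-theorem}.
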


Whether $\Gscr : \CMI(M,\R^n) \to \Oscr(M,\bold Q^{n-2})$ is a fibration or whether there are singularities of some sort over the non-full maps that prevent $\Gscr$ from being a fibration is an open question that is beyond our techniques at present.  For $n=3$, the fibre of $\Gscr$ over a constant map essentially consists of the holomorphic immersions $M\to\C$.  As determined in \cite{ForstnericLarusson2019CAG}, the space of such maps has the weak homotopy type of $\Cscr(M,\C^*)$.  As shown in Section \ref{sec:fibre}, the fibre of $\Gscr$ over a full immersion has that same homotopy type, suggesting that the question might have an affirmative answer.


\section{The main technical theorem}
\label{sec:technical}

\noindent
According to \cite[Definition 1.12.9]{AlarconForstnericLopez2021}, a compact subset $S\neq\varnothing$ of an open Riemann surface $M$ is {\em admissible} if it is $\Oscr(M)$-convex and of the form $S=K\cup\Gamma$, where $K$ is the union of finitely many pairwise disjoint smoothly bounded compact domains in $M$ and $\Gamma= S \setminus K$ is a finite union of pairwise disjoint smooth Jordan arcs meeting $K$ only at their endpoints (or not at all) and such that their intersections with the boundary of $K$ are transverse.
Given such a set $S=K\cup\Gamma$ and a complex submanifold $Z\subset\C^n$, we denote by $\mathscr{A}(S,Z)$ the space of all continuous maps $S\to Z$ that are holomorphic on $\mathring S=\mathring K$. For simplicity, we write $\mathscr{A}(S)=\mathscr{A}(S,\C)$.

In this section we prove the following theorem, which is the technical heart of the paper.
%
%
\begin{theorem}\label{th:main-technical-theorem}
Let $M$ be an open Riemann surface, $\theta$ be a holomorphic $1$-form vanishing nowhere on $M$, $S=K\cup\Gamma\subset M$ be an admissible subset, $k$ and $n$ be positive integers, and $f_p\colon M\to\C^n$ and $F_p\in H^1(M,\C^n)$ $(p\in [0,1]^k)$ be continuous families of full holomorphic maps and cohomology classes. Then, every continuous family of functions $\varphi_p:S\to\C^*=\C\setminus\{0\}$ $(p\in [0,1]^k)$ of class $\mathscr{A}(S)$ satisfying
\begin{equation}\label{eq:main-technical-theorem}
	\int_C \varphi_pf_p\theta = F_p([C])\quad 
	\text{for all closed curves $C\subset S$ and all $p\in [0,1]^k$}
\end{equation}
can be approximated uniformly on $[0,1]^k\times S$ by continuous families of holomorphic functions $\widetilde \varphi_p:M\to\C^*$ $(p\in [0,1]^k)$ such that
\[
	\int_C \widetilde\varphi_pf_p\theta = F_p([C])\quad 
	\text{for all closed curves $C\subset M$ and all $p\in [0,1]^k$}.
\]
Furthermore, if $\varphi_p$ is holomorphic on $M$, vanishes nowhere on $M$, and satisfies 
\[
	\int_C \varphi_pf_p\theta=F_p([C])\quad 
	\text{for all closed curves $C\subset M$, for all $p\in [0,1]^{k-1}\times\{0\}$},
\]
then we can choose $\widetilde \varphi_p=\varphi_p$ for all $p\in [0,1]^{k-1}\times\{0\}$.
\end{theorem}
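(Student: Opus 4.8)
The plan is to prove the theorem by the standard recursive ``Mergelyan plus period correction'' scheme for holomorphic maps into the Oka manifold $\C^*$, carried out in the parametric and relative setting. Exhaust $M$ by a sequence $S=M_0\subset M_1\subset M_2\subset\cdots$ with $\bigcup_jM_j=M$, where $M_1$ is a smoothly bounded compact domain onto which $S$ deformation retracts and, for $j\ge1$, each $M_{j+1}$ is an $\Oscr(M)$-convex smoothly bounded compact domain such that either $M_j$ is a deformation retract of $M_{j+1}$ (the \emph{noncritical case}) or $M_{j+1}$ arises from $M_j$ by attaching a single $1$-handle (the \emph{critical case}); such an exhaustion comes from Morse theory applied to a strictly subharmonic exhaustion function. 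It suffices to construct recursively continuous families $\varphi_p^{(j)}\colon M_j\to\C^*$ of class $\mathscr A(M_j)$ with $\int_C\varphi_p^{(j)}f_p\theta=F_p([C])$ for every closed curve $C\subset M_j$ and every $p\in[0,1]^k$, with $\varphi_p^{(0)}=\varphi_p$, with $\varphi_p^{(j+1)}$ uniformly $\epsilon_j$-close to $\varphi_p^{(j)}$ on $M_j$ where $\sum_j\epsilon_j$ is as small as we wish, and with $\varphi_p^{(j)}=\varphi_p|_{M_j}$ for all $p\in[0,1]^{k-1}\times\{0\}$. Letting the $\epsilon_j$ decrease fast enough, the limit $\widetilde\varphi_p=\lim_j\varphi_p^{(j)}$ exists, is holomorphic on $M$, continuous in $p$, uniformly close to $\varphi_p$ on $S$, equals $\varphi_p$ over the face, has the prescribed periods over all of $H_1(M,\Z)$ (since exactness over $H_1(M_{j_0})$ is maintained for all later $j$), and is $\C^*$-valued by Hurwitz's theorem, being a locally uniform limit of nowhere-vanishing holomorphic functions that is not identically zero.

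For the noncritical step $H_1(M_{j+1})\cong H_1(M_j)$. Because $\C^*$ is an Oka manifold, the parametric Mergelyan theorem for maps to Oka manifolds produces a continuous family $\psi_p\colon M_{j+1}\to\C^*$, holomorphic near $M_{j+1}$, uniformly close to $\varphi_p^{(j)}$ on $M_j$, and equal to $\varphi_p$ over the face (over the face we merely interpolate in $p$ between $\varphi_p^{(j)}=\varphi_p$ and a function close to it, which stays in $\C^*$). Its periods $\int_C\psi_pf_p\theta$, $[C]\in H_1(M_{j+1})$, are close to but not equal to $F_p([C])$, so we correct them with a period-dominating spray $\Phi_{p,w}=\psi_p\exp\bigl(\sum_{i=1}^Nw_ih_i\bigr)$, $w\in\C^N$, with $h_1,\dots,h_N$ holomorphic on $M$. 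Fixing pairwise disjoint loops $C_1,\dots,C_\ell$ in $\mathring M_j$ representing a basis of $H_1(M_j)$, the derivative at $w=0$ of $w\mapsto\bigl(\int_{C_a}\Phi_{p,w}f_p\theta\bigr)_{a=1}^\ell\in(\C^n)^\ell$ equals $\bigl(\int_{C_a}\psi_ph_if_p\theta\bigr)_{a,i}$. \emph{Here is the key use of fullness:} for every nonzero $\lambda\in(\C^n)^*$ the holomorphic $1$-form $\psi_p\langle\lambda,f_p\rangle\theta$ is not identically zero because $f_p$ is full, whence the span over all holomorphic $h$ of the vectors $\bigl(\int_{C_a}\psi_phf_p\theta\bigr)_a$ is all of $(\C^n)^\ell$; thus finitely many $h_i$ make the derivative surjective for every $p$ simultaneously, surjectivity being open and $[0,1]^k$ compact. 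The implicit function theorem, with a cutoff forcing $w=0$ over the face (where the periods of $\psi_p=\varphi_p$ are already correct, so $w=0$ solves the period equation), gives a small $w=w(p)$ depending continuously on $p$ and vanishing over the face; we set $\varphi_p^{(j+1)}=\Phi_{p,w(p)}$, still $\C^*$-valued since $w(p)$ is small.

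For the critical step $H_1(M_{j+1})=H_1(M_j)\oplus\Z[\delta]$ for a new cycle $\delta$. Choose the core arc $E$ of the handle so that $\tilde S:=M_j\cup E$ is admissible and a deformation retract of $M_{j+1}$. As $E$ is an arc, $\mathscr A(E)=C(E)$, and $E$ is contractible, we can extend $\varphi_p^{(j)}$ to a continuous family of $\C^*$-valued functions on $\tilde S$ of class $\mathscr A(\tilde S)$; writing $\delta$ as $E$ followed by a path in $M_j$ along which $\int\varphi_p^{(j)}f_p\theta$ is already determined, we further arrange $\int_\delta\varphi_p^{(j)}f_p\theta=F_p([\delta])$ by prescribing $\int_E\varphi_p^{(j)}f_p\theta$ — continuously in $p$ and with $\varphi_p^{(j)}=\varphi_p$ preserved over the face — using the analogue along $E$ of the period domination above, again a consequence of the fullness of $f_p$. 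The periods over $H_1(M_j)$ are unchanged and hence still correct. Since $\tilde S$ is a deformation retract of the smoothly bounded domain $M_{j+1}$, the noncritical step now applies with $\tilde S$ in place of $S$ and yields $\varphi_p^{(j+1)}$.

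The main obstacle is not any single estimate but the simultaneity and relativity bookkeeping: choosing one finite period-dominating family $h_1,\dots,h_N$ valid for all parameters, making every approximation and correction continuous in $p$ while leaving the family over $[0,1]^{k-1}\times\{0\}$ strictly unchanged, and — the subtlest point — prescribing $\int_E\varphi_p^{(j)}f_p\theta$ in the critical step while staying $\C^*$-valued, which once more must be powered by fullness. Everything else is the routine verification that the recursion can be run with the required smallness and that the limit has the stated properties.
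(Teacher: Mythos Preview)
Your overall scheme---Morse exhaustion, noncritical and critical inductive steps, limit via Hurwitz---matches the paper exactly, and your noncritical step (parametric Oka--Mergelyan into $\C^*$ followed by a multiplicative period-dominating spray and the implicit function theorem) is essentially the paper's Lemma~2.3.

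The gap is in the critical step. You say you will prescribe $\int_E\varphi_p^{(j)}f_p\theta$ ``using the analogue along $E$ of the period domination above'', but the period domination you set up in the noncritical step is an \emph{implicit-function-theorem} device: it only corrects periods that are already close to the target. On the new arc $E$ the target $F_p([\delta])$ is arbitrary, so you must produce, continuously in $p\in[0,1]^k$ and fixed over the face, a function $h_p:E\to\C^*$ with prescribed boundary values and \emph{any} prescribed integral $\int_E h_p f_p\theta$. A multiplicative spray $\exp(\sum w_ig_i)$ does not do this globally, and an additive correction leaves $\C^*$. You flag this as ``the subtlest point'' but do not supply the mechanism.

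The paper's new idea (its Lemma~2.4 and Corollary~2.5) is exactly what fills this hole. Because $f_p$ is holomorphic and $\C$-full, its restriction to any open piece of the arc is \emph{$\R$-full}: the real span of its image is all of $\C^n\cong\R^{2n}$. Hence for any target $\alpha_p\in\C^n$ one can find, continuously in $p$, a \emph{real}-valued function $x_p$ supported in a subarc with $\int x_p f_p\theta\approx -\imath(\alpha_p-\int f_p\theta)$. Then $\tilde h_p=1+\imath x_p$ lies in $1+\imath\R\subset\C^*$ automatically, matches the boundary values, and has $\int_E\tilde h_p f_p\theta\approx\alpha_p$. A small multiplicative spray (your period domination, now legitimate because the remaining error is small) on a disjoint subarc then makes the period exact. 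Without this $\R$-fullness/$1+\imath x$ trick, the critical step as you have written it does not go through.
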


The case $k=1$ of Theorem \ref{th:main-technical-theorem} was proved in \cite[Theorem 4.1]{AlarconForstnericLopez2019JGEA}; the proof relies in an essential way on the parameter space $[0,1]$ being 1-dimensional (see \cite[proof of Lemma 2.3]{AlarconForstnericLopez2019JGEA}).  The proof of Theorem \ref{th:main-technical-theorem} follows the scheme of the proof of \cite[Theorem 4.1]{AlarconForstnericLopez2019JGEA}, but with an additional idea that enables us to work with a parameter space of arbitrary dimension. In particular, we shall make use of \cite[Lemmas 3.2 and 4.2]{AlarconForstnericLopez2019JGEA}. The main new technical ingredient in our proof is the following extension of \cite[Lemma 4.3]{AlarconForstnericLopez2019JGEA} to the parameter space $[0,1]^k$ for arbitrary $k\ge 1$.

%
%
\begin{lemma}[The critical case]\label{lem:critical-case}
Let $M$, $\theta$, $k$, $n$, and $f_p$ $(p\in [0,1]^k)$ be as in Theorem \ref{th:main-technical-theorem}.   Also let $\rho:M\to [0,+\infty)$ be a smooth strongly subharmonic Morse exhaustion function and let $0<a<b$ be a pair of regular values of $\rho$ such that $\rho$ has a single critical point in $L\setminus\mathring K$, where $K=\{\rho\le a\}$ and $L=\{\rho\le b\}$. Assume that we have continuous families of functions $\varphi_p:K\to\C^*$ of class $\mathscr{A}(K)$ and cohomology classes $F_p\in H^1(M,\C)$ $(p\in [0,1]^k)$ such that 
\[
	\int_C \varphi_pf_p\theta = F_p([C])\quad
	\text{for all closed curves $C\subset K$ and all }p\in[0,1]^k.
\]
Then, the family $\varphi_p$ can be approximated uniformly on $[0,1]^k\times K$ by continuous families of functions $\widetilde\varphi_p:L\to\C^*$ $(p\in [0,1]^k)$ of class $\mathscr{A}(L)$ such that
\[
	\int_C \widetilde\varphi_pf_p\theta = F_p([C])\quad
	\text{for all closed curves $C\subset L$ and all }p\in[0,1]^k.
\]
Furthermore, if $\varphi_p$ is of class $\mathscr{A}(L)$, vanishes nowhere on $L$, and satisfies
\[
	\int_C \varphi_pf_p\theta = F_p([C])\quad
	\text{for all closed curves $C\subset L$, for all }p\in[0,1]^{k-1}\times\{0\},
\]
then we can choose $\widetilde\varphi_p=\varphi_p$ for all $p\in [0,1]^{k-1}\times\{0\}$.
\end{lemma}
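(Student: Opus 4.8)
The plan is to follow the scheme of \cite[Lemma 4.3]{AlarconForstnericLopez2019JGEA} (the case $k=1$) but to replace the step that genuinely uses the 1-dimensionality of the parameter by an argument valid over $[0,1]^k$. Pass through the single critical point of $\rho$ in $L\setminus\mathring K$. By the standard description of how sublevel sets change across a Morse critical point, either (i) a new connected component is born, or (ii) a $1$-handle is attached, so that $L$ deformation retracts onto $K$ with an arc $\gamma$ attached; the subset $S=K\cup\gamma$ is admissible in the sense recalled in Section~\ref{sec:technical}, and $L$ is a smoothly bounded neighbourhood of $S$ with the same homology. Case (i) is straightforward: the new component carries no topology relevant to the period conditions, $\C^*$ is Oka, and one extends the family $\varphi_p$ from $K$ to $L$ by a Mergelyan-type approximation with parameters (\cite[Lemma 3.2 or 4.2]{AlarconForstnericLopez2019JGEA}), taking values in $\C^*$, with no period constraint to satisfy on the new piece and with the prescribed extension over $p\in[0,1]^{k-1}\times\{0\}$ kept fixed. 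So the heart of the matter is case (ii).

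In case (ii), first extend the family $\varphi_p$ from $K$ to the admissible set $S=K\cup\gamma$: one chooses, continuously in $p$, values of $\varphi_p$ along $\gamma$, subject to the single new period condition $\int_{C_0}\varphi_p f_p\theta = F_p([C_0])$ coming from the new loop $C_0$ closing up through $\gamma$. This is where fullness of $f_p$ is essential, exactly as highlighted in the excerpt: because $f_p$ is full, the $\C^n$-valued $1$-form $f_p\theta$ is nowhere "degenerate" in the sense needed to realise a prescribed complex period increment by a small, nowhere-vanishing, continuously-varying modification of $\varphi_p$ supported near a point of $\gamma$ — one solves a linear equation whose coefficient (an integral of a component of $f_p\theta$) can be kept nonzero over the whole compact parameter space by fullness, and one keeps $\varphi_p$ away from $0$ by making the modification small. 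Over $p\in[0,1]^{k-1}\times\{0\}$, where $\varphi_p$ already satisfies all period conditions on $L\supset S$, no modification is needed and we leave $\varphi_p$ untouched there; continuity in $p$ of the construction then forces the modification to be small near that face, which is compatible with the approximation statement.

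Having obtained a family $\varphi_p\in\mathscr A(S,\C^*)$ satisfying \eqref{eq:main-technical-theorem} with $S$ in place of $K$ and agreeing with the given data on $[0,1]^{k-1}\times\{0\}$, the final step is to push from the admissible set $S$ to the smoothly bounded domain $L$ by a parametric Mergelyan approximation with the period conditions preserved. Since $S$ and $L$ have the same first homology, the period conditions on $L$ are the same finite set of conditions; one approximates $\varphi_p$ uniformly on $S$ by a family holomorphic on a neighbourhood of $L$ (using that $\C^*$ is Oka and the parametric Mergelyan theorem, \cite[Lemmas 3.2 and 4.2]{AlarconForstnericLopez2019JGEA}), then corrects the resulting small period errors by the same fullness-based linear-algebra device as above, now with corrections supported in $\mathring K\subset\mathring L$ and depending continuously on $p$, vanishing identically over $[0,1]^{k-1}\times\{0\}$. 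Keeping all corrections small guarantees the final $\widetilde\varphi_p$ is nowhere zero on $L$, so $\widetilde\varphi_p\in\mathscr A(L,\C^*)$, and uniform closeness to $\varphi_p$ on $[0,1]^k\times K$ is inherited from the approximation.

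The main obstacle is the period-correction step over the full $k$-cube: one must choose the supports and the correcting coefficients for the (finitely many) period conditions so that the correction map $p\mapsto(\text{coefficients})\mapsto(\text{modification of }\varphi_p)$ is continuous on $[0,1]^k$, is identically the identity on the face $[0,1]^{k-1}\times\{0\}$, keeps the functions in $\C^*$, and stays small. Unlike the $k=1$ case, there is no interval structure to exploit, so the correction must be organised as the solution of a parametrised linear system whose coefficient matrix — built from integrals of components of $f_p\theta$ over suitable supporting arcs — is invertible for every $p$; fullness of the family $f_p$, together with a compactness argument on $[0,1]^k$, is what makes a single choice of supports work uniformly, and this is precisely the point at which the proof of the case $k=1$ breaks down and the "additional idea" of the present paper is needed.
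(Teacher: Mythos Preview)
Your overall architecture matches the paper's: split by Morse index, handle index~$0$ trivially, and for index~$1$ extend $\varphi_p$ across the attaching arc $\gamma$ with the one new period prescribed, then invoke the noncritical lemma to pass from $S=K\cup\gamma$ to $L$. The place where your argument has a real gap is the period step on $\gamma$.

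You propose to hit the target period $F_p([C_0])$ by solving a parametrised linear system whose coefficient matrix, built from integrals of components of $f_p\theta$ over fixed supporting arcs, is invertible for all $p$ by fullness and compactness. The difficulty is that the multiplier must land in $\C^*$. A correction of the shape $1+\sum_j \zeta_j(p) g_j$ only stays nonvanishing when the coefficients $\zeta_j(p)$ are small, and small coefficients only reach \emph{nearby} periods; but the prescribed value $F_p([C_0])$ can be arbitrarily far from $\int_{C_0}\varphi_p f_p\theta$, so a single implicit-function-theorem or linear-solve step does not suffice. Your sentence ``one keeps $\varphi_p$ away from $0$ by making the modification small'' is exactly the point that fails: the size of the modification is dictated by the target, not by you.

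The paper's additional idea (Lemma~\ref{lem:R-full-k} and Corollary~\ref{co:R-full-k}) resolves this in two stages. First one observes that a full \emph{holomorphic} map $f_p$ has $\R$-span equal to $\C^n$ on any open set (not merely $\C$-span), so by compactness of $[0,1]^k$ one can route $\gamma$ through a finite set on which the $f_p$ are uniformly $\R$-full. One then uses multipliers of the special form $1+\imath x$ with $x$ \emph{real}; these never vanish regardless of the size of $x$, and because of $\R$-fullness the map $x\mapsto \int x f_p\theta$ reaches an $\epsilon$-neighbourhood of any prescribed complex vector. Only after this large but safe first move does one apply the period-dominating spray and the implicit function theorem, now with small coefficients, to land exactly on $F_p([C_0])$, with the correction vanishing on $[0,1]^{k-1}\times\{0\}$ by construction. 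Your proposal identifies the right obstacle but not this mechanism for overcoming it.
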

We note that \cite[Lemma 4.2]{AlarconForstnericLopez2019JGEA} holds in our more general framework with the same proof but replacing the parameter space $[0,1]$ by $[0,1]^k$ for the given integer $k\ge 1$. We record this result for later reference.
%
%
\begin{lemma}[The noncritical case]\label{lem:noncritical-case}
Let $M$, $S=K\cup\Gamma$, $\theta$, $k$, $n$, and $f_p$ $(p\in [0,1]^k)$ be as in Theorem \ref{th:main-technical-theorem}. If $L\subset M$ is a smoothly bounded $\Oscr(M)$-convex compact domain such that $S\subset\mathring L$ and $S$ is a deformation retract of $L$, then every continuous family of functions $\varphi_p:S\to\C^*$ $(p\in [0,1]^k)$ of class $\mathscr{A}(S)$ can be uniformly approximated on $[0,1]^k\times S$ by continuous families of functions $\widetilde \varphi_p:L\to\C^*$ $(p\in [0,1]^k)$ of class $\mathscr{A}(L)$ such that
$(\widetilde\varphi_p-\varphi_p)f_p\theta$ is exact on $S$ for all $p\in [0,1]^k$.
	
Furthermore, if $\varphi_p$ is of class $\mathscr{A}(L)$ and vanishes nowhere on $L$  for all $p\in [0,1]^{k-1}\times\{0\}$, then we can choose $\widetilde\varphi_p=\varphi_p$ for all $p\in [0,1]^{k-1}\times\{0\}$.
\end{lemma}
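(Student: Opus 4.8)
The plan is to follow the proof of \cite[Lemma 4.2]{AlarconForstnericLopez2019JGEA} essentially verbatim, observing that the only features of the parameter interval used there are that it is a compact space and that $\{0\}$ is a closed subset of it; the cube $[0,1]^k$, together with its closed subset $[0,1]^{k-1}\times\{0\}$, plays exactly the same role, and nothing about the dimension of the parameter space intervenes (in contrast with Lemma \ref{lem:critical-case}, whose proof genuinely uses the one-dimensionality of the parameter occurring at its inductive core). The proof proceeds in two steps. First, since $S$ is a deformation retract of the smoothly bounded $\Oscr(M)$-convex compact domain $L$, the family $\varphi_p$ admits a continuous extension to a family of continuous maps $L\to\C^*$; as $\C^*$ is an Oka manifold, the parametric Mergelyan approximation theorem for maps into Oka manifolds (see \cite{Forstneric2017E} and \cite{AlarconForstnericLopez2021}) then produces a continuous family of functions $g_p:L\to\C^*$ of class $\mathscr{A}(L)$ approximating $\varphi_p$ as closely as desired uniformly on $[0,1]^k\times S$; its relative form yields $g_p=\varphi_p$ for those $p\in[0,1]^{k-1}\times\{0\}$ for which $\varphi_p$ is already of class $\mathscr{A}(L)$ and nowhere vanishing.

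Second, one corrects the periods. Pick smooth closed curves $\delta_1,\dots,\delta_\ell\subset\mathring K$ whose homology classes form a basis of $H_1(K,\Z)=H_1(S,\Z)=H_1(L,\Z)$, and consider the $\C$-linear period map $\Pscr_p(h)=\big(\int_{\delta_j}hf_p\theta\big)_{j=1}^\ell\in(\C^n)^\ell$. As in \cite[Lemma 4.2]{AlarconForstnericLopez2019JGEA}, the fullness of $f_p$ forces $\Pscr_p$ to be surjective for every $p$; since fullness is an open condition and $[0,1]^k$ is compact, one may fix finitely many functions $h_1,\dots,h_N$, holomorphic on a neighbourhood of $L$ (which exist since $L$ is $\Oscr(M)$-convex), such that the vectors $\big(\int_{\delta_j}g_p h_i f_p\theta\big)_j$, $i=1,\dots,N$, span $(\C^n)^\ell$ for every $p$. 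Then $\Phi_p(x,\zeta)=g_p(x)\exp\big(\sum_{i=1}^N\zeta_i h_i(x)\big)$ is a continuous family of holomorphic sprays $L\times\C^N\to\C^*$ with core $\Phi_p(\cdot,0)=g_p$ that is period dominating at $\zeta=0$. Since $\Pscr_p(g_p)$ is close to $\Pscr_p(\varphi_p)$, the implicit function theorem, applied to the continuous family $\zeta\mapsto\Pscr_p(\Phi_p(\cdot,\zeta))$, furnishes a continuous family $\zeta_p\in\C^N$ near $0$, with $\zeta_p=0$ on $[0,1]^{k-1}\times\{0\}$ in the relative situation, such that $\Pscr_p(\Phi_p(\cdot,\zeta_p))=\Pscr_p(\varphi_p)$ for all $p$. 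Setting $\widetilde\varphi_p=\Phi_p(\cdot,\zeta_p)$ yields a continuous family of $\C^*$-valued functions of class $\mathscr{A}(L)$, arbitrarily close to $\varphi_p$ on $[0,1]^k\times S$, satisfying $\int_{\delta_j}(\widetilde\varphi_p-\varphi_p)f_p\theta=0$ for all $j$ and $p$, so that $(\widetilde\varphi_p-\varphi_p)f_p\theta$ is exact on $S$, and equal to $\varphi_p$ on $[0,1]^{k-1}\times\{0\}$ whenever the additional hypothesis there is in force.

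The routine but essential point to verify is that each of the three constructions above, namely the parametric Mergelyan approximation, the choice of the functions $h_i$, and the solution $\zeta_p$ of the period equation via the implicit function theorem, can be carried out so as to depend continuously on $p\in[0,1]^k$ and to respect the prescribed normalisation on the face $[0,1]^{k-1}\times\{0\}$. This is precisely the bookkeeping done for $k=1$ in \cite[Lemma 4.2]{AlarconForstnericLopez2019JGEA}, and it carries over word for word to $[0,1]^k$; the genuinely new difficulty caused by a higher-dimensional parameter space is confined to the critical case, Lemma \ref{lem:critical-case}, and does not arise here.
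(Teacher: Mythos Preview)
Your proposal is correct and is exactly the approach the paper takes: the paper does not give a new proof of this lemma at all, but simply records that \cite[Lemma 4.2]{AlarconForstnericLopez2019JGEA} holds with the parameter space $[0,1]$ replaced by $[0,1]^k$ and the same proof. Your write-up spells out the two steps of that proof (parametric Mergelyan approximation into the Oka target $\C^*$, then period correction via a period-dominating multiplicative spray and the implicit function theorem) and correctly notes that neither step uses anything about the parameter space beyond compactness and the relative condition on the closed face, so nothing changes when $[0,1]$ is replaced by $[0,1]^k$.
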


%
%
\begin{proof}[Proof of Theorem \ref{th:main-technical-theorem} assuming Lemma \ref{lem:critical-case}]
Choose a smooth strongly subharmonic Morse exhaustion function $\rho:M\to\R$ and a divergent sequence of regular values $0<a_1<a_2<\cdots$ of $\rho$ such that, setting $K_0=S$ and $K_j=\{\rho\le a_j\}$ for all $j\ge 1$, the following conditions are satisfied.
\begin{itemize}
\item $K_0\subset\mathring K_1$ and $K_0$ is a strong deformation retract of $K_1$.
\smallskip
\item $\rho$ has at most a single critical point in $K_{j+1}\setminus \mathring K_j$ (which lies in $\mathring K_{j+1}\setminus K_j$) for all $j\ge 1$.
\end{itemize}
It turns out that $K_j$ is a smoothly bounded $\Oscr (M)$-convex compact domain for all $j\ge 1$, and
\[
	S=K_0\Subset K_1\Subset K_2\cdots\Subset \bigcup_{j\ge 0} K_j=M
\]
is an exhaustion of $M$. Let $\varphi_p:S\to\C^*$ $(p\in [0,1]^k)$ be a continuous family of functions of class $\mathscr{A}(S)$ satisfying condition \eqref{eq:main-technical-theorem}, and fix $\epsilon>0$. Call $\varphi_p^0=\varphi_p$ for all $p\in [0,1]^k$. A standard recursive application of Lemmas \ref{lem:critical-case} and \ref{lem:noncritical-case} provides a sequence of continuous families of functions $\varphi_p^j:K_j\to\C^*$ $(p\in [0,1]^k)$, $j\ge 1$, of class $\mathscr{A}(K_j)$ satisfying the following conditions for all $j\ge 1$.
\begin{itemize}
\item $\varphi_p^j$ is as close as desired to $\varphi_p^{j-1}$ uniformly on $[0,1]^k\times K_{j-1}$.
\smallskip
\item $\displaystyle\int_C \varphi_p^j f_p\theta = F_p([C])$ for all closed curves $C\subset K_j$ and all $p\in[0,1]^k$.
\smallskip
\item If $\varphi_p$ is holomorphic on $M$, vanishes nowhere on $M$, and satisfies 
\[
	\int_C \varphi_pf_p\theta=F_p([C])\quad 
	\text{for all closed curves $C\subset M$, for all $p\in [0,1]^{k-1}\times\{0\}$},
\]
then we can choose $\varphi_p^j=\varphi_p^0=\varphi_p$ for all $p\in [0,1]^{k-1}\times\{0\}$.
\end{itemize}
If we take $\varphi_p^j$ sufficiently close to $\varphi_p^{j-1}$ on $[0,1]^k\times K_{j-1}$ at each step, we obtain in the limit a continuous family of holomorphic functions 
\[
	\widetilde \varphi_p:=\lim_{j\to\infty}\varphi_p^j:M\to\C^*\quad (p\in [0,1]^k)
\] 
which is $\epsilon$-close to the family $\varphi_p$ on $[0,1]^k\times S$ and satisfies the requirements in the theorem.
\end{proof}

To complete the proof of Theorem \ref{th:main-technical-theorem} it remains to prove Lemma \ref{lem:critical-case}. We begin with some preparations. Given an integer $n\ge 1$, we shall say that a continuous map $f:[0,1]\to\R^n$ is {\em $\R$-full\,} if its image is contained in no real linear hyperplane, that is, the real span of $f([0,1])$ equals $\R^n$. Likewise, a continuous map $f:[0,1]\to\C^n$ is said to be {\em $\C$-full\,} if the complex span of $f([0,1])$ equals $\C^n$. It is clear that every $\R$-full map $[0,1]\to\C^n=\R^{2n}$ is $\C$-full, but the converse does not hold true in general.
%
%
\begin{lemma}\label{lem:R-full-k}
Let $k$ and $n$ be positive integers, $P=[0,1]^k$, and $f:P\times [0,1]\to\R^n$ and $\alpha:P\to\R^n$ be continuous maps. If the path $f_p:=f(p,\cdot):[0,1]\to\R^n$ is $\R$-full for every $p\in P$, then for any $\epsilon>0$ there exists a continuous function $x:P\times [0,1]\to\R$ such that $x(p,s)=0$ for $p\in P$ and $s\in\{0,1\}$ and
\[
	\left|\int_0^1x(p,s)f(p,s)\, ds - \alpha(p)\right| < \epsilon \quad \text{for all }p\in P.
\]
If in addition $\alpha(p)=0$ for all $p\in Q=[0,1]^{k-1}\times\{0\}\subset P$, then we can choose $x$ with $x(p,s)=0$ for all $p\in Q$ and $s\in [0,1]$.
\end{lemma}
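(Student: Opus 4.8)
The plan is to treat $P$ as a compact cube and build $x$ by a partition-of-unity patching of local solutions, exploiting the openness of $\R$-fullness. First I would observe that for a fixed $p_0\in P$, the path $f_{p_0}$ being $\R$-full means there exist parameters $s_1,\ldots,s_n\in(0,1)$ with $f_{p_0}(s_1),\ldots,f_{p_0}(s_n)$ a basis of $\R^n$. By continuity of $f$, the vectors $f_{p}(s_1),\ldots,f_{p}(s_n)$ remain a basis for all $p$ in some neighbourhood $U_{p_0}$ of $p_0$. On such a neighbourhood one can solve for the desired integral exactly: pick small bump functions $b_i$ supported in disjoint intervals around $s_i$ inside $(0,1)$ (so $b_i$ vanishes near $s\in\{0,1\}$), set $x(p,s)=\sum_i c_i(p)b_i(s)$, and choose the coefficients $c_i(p)$ by inverting the (continuously varying, invertible) matrix whose columns are roughly $f_p(s_i)\int b_i$; this makes $\int_0^1 x(p,s)f(p,s)\,ds$ equal to $\alpha(p)$ on $U_{p_0}$ up to an error that can be made arbitrarily small by shrinking the supports of the $b_i$ (controlling the variation of $f_p$ across each bump's support). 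Actually, to avoid the error entirely at this local stage, one can replace $f_p(s_i)\int b_i$ by the exact vector $\int b_i(s)f(p,s)\,ds$, which is still close to $f_p(s_i)\int b_i$ and hence still gives an invertible matrix on a possibly smaller neighbourhood; then the local solution is exact.

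Next I would cover the compact cube $P$ by finitely many such neighbourhoods $U_1,\ldots,U_N$, with associated continuous local solutions $x^{(\ell)}:U_\ell\times[0,1]\to\R$ each satisfying $x^{(\ell)}(p,\cdot)$ vanishing near $\{0,1\}$ and $\int_0^1 x^{(\ell)}(p,s)f(p,s)\,ds=\alpha(p)$ for $p\in U_\ell$. Choose a continuous partition of unity $\{\chi_\ell\}$ on $P$ subordinate to this cover, and set
\[
	x(p,s) = \sum_{\ell=1}^N \chi_\ell(p)\, x^{(\ell)}(p,s).
\]
Because $\sum_\ell \chi_\ell\equiv 1$ and each $x^{(\ell)}$ is an \emph{exact} solution on the support of $\chi_\ell$, linearity of the integral gives $\int_0^1 x(p,s)f(p,s)\,ds=\sum_\ell\chi_\ell(p)\alpha(p)=\alpha(p)$ exactly — so in fact one obtains equality, not just $\epsilon$-closeness, which is stronger than the statement asks. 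The boundary condition $x(p,s)=0$ for $s\in\{0,1\}$ is inherited from each $x^{(\ell)}$, and continuity of $x$ on $P\times[0,1]$ is clear from continuity of the $\chi_\ell$ and the $x^{(\ell)}$.

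For the addendum, when $\alpha$ vanishes on $Q=[0,1]^{k-1}\times\{0\}$, I want $x$ to vanish identically on $Q\times[0,1]$. Here I would build the local solutions with an extra factor: near a point $p_0\in Q$, first note that the above construction already produces $x^{(\ell)}(p,\cdot)$ realizing $\alpha(p)$; to kill it on $Q$ I would instead not insist on exactness but reintroduce a small controllable error, OR — cleaner — multiply the patched solution by a cutoff. Concretely, let $\lambda:P\to[0,1]$ be continuous with $\lambda\equiv 0$ on $Q$ and $\lambda\equiv 1$ outside a small neighbourhood $V$ of $Q$; replace $x$ by $\lambda(p)x(p,s)$. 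Then on $Q$ it vanishes as required, while for $p\notin V$ we still have the exact identity; in the transition region $V\setminus Q$ the integral equals $\lambda(p)\alpha(p)$, which differs from $\alpha(p)$ by $(1-\lambda(p))\alpha(p)$, and since $\alpha$ is continuous and vanishes on $Q$ we may shrink $V$ so that $|\alpha(p)|<\epsilon$ on $V$, making this error less than $\epsilon$. The main obstacle I anticipate is precisely this last balancing act: reconciling an \emph{exact} constraint away from $Q$ with \emph{vanishing} of $x$ on $Q$ forces an $\epsilon$-error near $Q$, and one must organize the cutoff $\lambda$ and the neighbourhood $V$ carefully (using compactness of $Q$ and uniform continuity of $\alpha$) so the error is genuinely uniform in $p$; everything else is routine partition-of-unity bookkeeping together with the elementary linear-algebra fact that $\R$-fullness gives, locally in $p$, an invertible sampling matrix.
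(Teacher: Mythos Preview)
Your proof is correct and takes a genuinely different route from the paper's. The paper argues by induction on $k$: it first proves a $k=0$ version (Claim~\ref{cl:k=0}) by choosing basis points and building a near-step function, then for the inductive step fixes the last coordinate $t\in[0,1]$, applies the induction hypothesis to obtain functions $y_t$ on $[0,1]^{k-1}\times[0,1]$, and makes the dependence on $t$ continuous by linear interpolation across a fine partition $0=t_0<\cdots<t_m=1$; the $\epsilon$-error arises from this interpolation. You instead work directly on all of $P$: around each $p_0$ you pick basis parameters $s_1,\dots,s_n\in(0,1)$, observe that the exact integral vectors $v_i(p)=\int b_i(s)f(p,s)\,ds$ vary continuously and remain independent on a neighbourhood, and solve the linear system to get an \emph{exact} local solution; then a subordinate partition of unity patches these into a global exact solution, since the constraint is affine in $x$. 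This is more direct and slightly stronger for the first assertion (you get equality, not just $\epsilon$-closeness). The $\epsilon$ reappears for you only in the addendum, where multiplying by a cutoff $\lambda$ vanishing on $Q$ replaces $\alpha(p)$ by $\lambda(p)\alpha(p)$ and the discrepancy $(1-\lambda(p))|\alpha(p)|$ is controlled by shrinking the neighbourhood $V$ of $Q$ so that $|\alpha|<\epsilon$ there; this is exactly the right bookkeeping and works because $\alpha$ is continuous on the compact $P$ with $\alpha|_Q=0$. Either approach suffices for the application in the paper, where only the $\epsilon$-approximate conclusion is used.
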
 
In case $k=1$, we identify $Q$ with $\{0\}\subset P=[0,1]$.
%
%
In the proof we shall use the following observation, which corresponds to the case $k=0$ in the lemma.
\begin{claim}\label{cl:k=0}
If $f:[0,1]\to \R^n$ $(n\ge 1)$ is continuous and $\R$-full, then for any $\alpha\in\R^n$ and any $\epsilon>0$ there exists a continuous function $x:[0,1]\to\R$ such that $x(0)=x(1)=0$ and
\[
	\left| \int_0^1 x(s) f(s)\, ds - \alpha \right| < \epsilon.
\]
\end{claim}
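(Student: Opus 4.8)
The plan is to reduce the claim to a finite-dimensional surjectivity statement and then smooth things out. Since $f$ is $\R$-full, there exist points $0<s_1<s_2<\cdots<s_n<1$ such that the vectors $f(s_1),\ldots,f(s_n)$ span $\R^n$; such points exist by continuity, because if no such $n$-tuple existed then the image of $f$ would lie in a proper subspace. Having fixed these points, the linear map $\R^n\to\R^n$ sending $(c_1,\ldots,c_n)$ to $\sum_{i=1}^n c_i f(s_i)$ is an isomorphism, so given $\alpha$ we may choose coefficients $c_1,\ldots,c_n$ with $\sum_i c_i f(s_i)=\alpha$.

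Next I would realise the discrete sum $\sum_i c_i f(s_i)$ as an integral $\int_0^1 x(s)f(s)\,ds$ up to arbitrarily small error. For each $i$, let $\psi_i^\delta$ be a continuous bump function supported in the interval $(s_i-\delta, s_i+\delta)\subset(0,1)$ with $\int_0^1 \psi_i^\delta(s)\,ds=1$ (and with the supports pairwise disjoint and avoiding $\{0,1\}$ for $\delta$ small), and set $x_\delta=\sum_{i=1}^n c_i\psi_i^\delta$. Then $x_\delta(0)=x_\delta(1)=0$, and by the continuity of $f$ and the standard approximate-identity estimate,
\[
	\int_0^1 x_\delta(s)f(s)\,ds = \sum_{i=1}^n c_i\int_{s_i-\delta}^{s_i+\delta}\psi_i^\delta(s)f(s)\,ds \longrightarrow \sum_{i=1}^n c_i f(s_i)=\alpha
\]
as $\delta\to 0$, since $\int_{s_i-\delta}^{s_i+\delta}\psi_i^\delta(s)(f(s)-f(s_i))\,ds$ has norm at most $\sup_{|t-s_i|\le\delta}|f(t)-f(s_i)|$, which tends to $0$. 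Choosing $\delta$ small enough that the total error is below $\epsilon$ and setting $x=x_\delta$ completes the argument.

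The only genuine subtlety — and what I would regard as the main point to get right rather than a serious obstacle — is the existence of the $n$ points $s_i$ with $f(s_1),\ldots,f(s_n)$ linearly independent, i.e.\ that $\R$-fullness of the \emph{path} forces $n$ of its \emph{values} to be independent. This follows by an induction on dimension: having chosen $s_1,\ldots,s_j$ spanning a $j$-dimensional subspace $V$ with $j<n$, $\R$-fullness gives some $s$ with $f(s)\notin V$ (otherwise $f([0,1])\subseteq V\subsetneq\R^n$), and we take $s_{j+1}=s$; a small perturbation of the $s_i$ (using continuity of $f$ and openness of the independence condition) arranges them to be distinct and interior. Everything else is routine, and the output is a continuous, indeed piecewise-smooth, function $x$.
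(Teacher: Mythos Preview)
Your proof is correct and follows essentially the same approach as the paper: pick $n$ interior points $s_1,\ldots,s_n$ with $f(s_1),\ldots,f(s_n)$ a basis of $\R^n$, then build $x$ from functions supported on small disjoint intervals around the $s_i$. The only cosmetic difference is that the paper uses characteristic functions of the intervals $[s_j-\sigma,s_j+\sigma]$ (so the integrals $v_j=\int_{s_j-\sigma}^{s_j+\sigma} f$ themselves form a basis for small $\sigma$, giving exact equality $\int x_0 f=\alpha$ with a step function $x_0$ that is then smoothed), whereas you use normalised bumps and an approximate-identity estimate to get the $\epsilon$-inequality directly; both are standard and equivalent here.
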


\begin{proof}
If $\alpha=0\in\R^n$, then we simply choose $x=0$. Assume that $\alpha\neq 0$.  By $\R$-fullness of $f$ there are points $0<s_1<s_2<\cdots<s_n<1$ in $[0,1]$ such that
\begin{equation}\label{eq:spanR-k=0}
	{\rm span}_\R\{f(s_1),\ldots,f(s_n)\}=\R^n.
\end{equation}
Thus, there is a number $\sigma>0$ so small that the intervals $[s_j-\sigma,s_j+\sigma]$, $j\in\{1,\ldots,n\}$, lie in $(0,1)$ and are pairwise disjoint. Set 
\[
	v_j=\int_{s_j-\sigma}^{s_j+\sigma} f(s)\, ds\in\R^n,\quad j=1,\ldots,n,
\]
and note that $v_j$ is close to $2\sigma f(s_j)$ provided that $\sigma$ is small.
We assume as we may by \eqref{eq:spanR-k=0} that $\sigma>0$ is so small that
${\rm span}_\R\{v_1,\ldots,v_n\}=\R^n$,
and write 
\[
	\alpha=\sum_{j=1}^n\lambda_jv_j
\] 
for (unique) $\lambda_1,\ldots,\lambda_n\in\R$. The step function $x_0:[0,1]\to\R$ given by
\[
	x_0(s)=\left\{
	\begin{array}{ll}
	\lambda_j & \text{ if }s\in [s_j-\sigma,s_j+\sigma],\; j=1,\ldots,n, \medskip
	\\
	0 & \text{ if }s\in [0,1]\setminus\bigcup_{j=1}^n [s_j-\sigma,s_j+\sigma],
	\end{array}\right.
\]
satisfies $x_0(0)=x_0(1)=0$ and $\int_0^1 x_0(s)f(s)=\alpha$. Suitably deforming $x_0$ in a small neighbourhood of the points $s_j\pm\sigma$, $j=1,\ldots,n$, to make it continuous, we obtain a function $x:[0,1]\to\R$ satisfying the required conditions.
\end{proof}

%
%
\begin{proof}[Proof of Lemma \ref{lem:R-full-k}]
We proceed by induction on the positive integer $k$. For the base case when $k=1$, we have $Q=\{0\}\subset P=[0,1]$ and a pair of continuous maps $f:P\times[0,1]\to\R^n$ and $\alpha:P\to\R^n$. Since the map $f_p:[0,1]\to\R^n$ is $\R$-full for every $p\in P$, Claim \ref{cl:k=0} gives a continuous function $y_p:[0,1]\to\R$ such that \begin{equation}\label{eq:yp(0)}
	y_p(0)=y_p(1)=0
\end{equation}
and
\[
	\left| \int_0^1 y_p(s) f_p(s)\, ds - \alpha(p) \right| < \epsilon\quad \text{for all }p\in P.
\]
If $\alpha(0)=0$ then we choose $y_0=0$. The problem now is that $y_p$ does not depend continuously on $p\in P$, so we have to do some more work.
By continuity of $f$ and $\alpha$ and compactness of $P$, there is a partition $0=p_0<p_1<\cdots<p_k=1$ of $P=[0,1]$ such that 
\begin{equation}\label{eq:y_p_j}
	\left| \int_0^1 y_{p_j}(s) f_p(s)\, ds - \alpha(p) \right| < \epsilon
	\quad \text{for all }p\in [p_{j-1},p_j],\; j=1,\ldots,k.
\end{equation}
The function $x:P\times[0,1]\to\R$ given by
\[
	x(p,\cdot)=\frac{p_j-p}{{p_j-p_{j-1}}}\, y_{p_{j-1}}+\frac{p-p_{j-1}}{p_j-p_{j-1}}\, y_{p_j}
	\quad \text{for all }p\in [p_{j-1},p_j],\; j=1,\ldots,k,
\]
is continuous and, in view of \eqref{eq:yp(0)}, satisfies $x(p,0)=x(p,1)=0$ for all $p\in P$. Moreover, \eqref{eq:y_p_j} ensures that
\[
	\left| \int_0^1 x(p,s) f(p,s)\, ds - \alpha(p) \right| < \epsilon\quad \text{for all }p\in P.
\]
Finally, note that $x(0,\cdot)=x(p_0,\cdot)=y_0$; hence $x(0,s)=0$ for all $s\in [0,1]$ provided that $\alpha(0)=0$. This proves the base case.

For the inductive step, fix an integer $k\ge 2$ and assume that the lemma holds for $P'=[0,1]^{k-1}$ and $Q'=[0,1]^{k-2}\times\{0\}$. If $k=2$, we identify $Q'$ with $\{0\}\subset P'=[0,1]$. We write $P=[0,1]^k=P'\times[0,1]$ and $Q=[0,1]^{k-1}\times\{0\}=P'\times\{0\}$. We use the same argument as above. Fix $t\in[0,1]$. The map $f_q^t:=f_{(q,t)}:[0,1]\to\R^n$ is $\R$-full for every $q\in P'$, and hence the induction hypothesis provides a continuous function $y_t:P'\times [0,1]\to\R$ such that $y_t(q,0)=y_t(q,1)=0$ for all $q\in P'$,
and
\[
	\left| \int_0^1 y_t(q,s) f^t_q(s)\, ds - \alpha(q,t) \right| < \epsilon\quad \text{for all }q\in P'.
\]
If $\alpha(q,0)=0$ for all $q\in P'$ then we choose $y_0=0$. Again, the problem is that the map $y_t$ does not depend continuously on $t\in[0,1]$.
To arrange this, take a partition $0=t_0<t_1<\cdots<t_k=1$ of $[0,1]$ such that 
\[
	\left| \int_0^1 y_{t_j}(q,s) f_q^t(s)\, ds - \alpha(q,t) \right| < \epsilon
	\quad \text{for all }q\in P'\text{ and }t\in [t_{j-1},t_j],\; j=1,\ldots,k.
\]
The function $x:P\times[0,1]=P'\times[0,1]\times[0,1]\to\R$ given by
\[
	x(\cdot,t,\cdot)=\frac{t_j-t}{{t_j-t_{j-1}}}\, y_{t_{j-1}}+\frac{t-t_{j-1}}{t_j-t_{j-1}}\, y_{t_j}
	\quad \text{for all }t\in [t_{j-1},t_j],\; j=1,\ldots,k,
\]
satisfies the required conditions. This completes the induction.
\end{proof}

%
%
\begin{corollary}\label{co:R-full-k}
Let $k$, $n$, $P$, and $Q$ be as in Lemma \ref{lem:R-full-k}, let $f:P\times [0,1]\to\C^n$ and $\alpha:P\to\C^n$ be continuous maps, and assume that there is a pair of closed intervals $J$ and $J'$ such that $J\cap J'=\varnothing$, $J\cup J'\subset (0,1)$, and the path $f_p:=f(p,\cdot):[0,1]\to\C^n$ is $\C$-full on $J$ and $\R$-full on $J'$ for every $p\in P$. Then, there exists a continuous function $h:P\times [0,1]\to\C^*$ such that $h(p,0)=h(p,1)=1$ for all $p\in P$ and
\[
	\int_0^1 h(p,s)f(p,s)\, ds = \alpha(p) \quad \text{for all }p\in P.
\]
If in addition $\alpha(p)=\int_0^1 f(p,s)\, ds$ for all $p\in Q\subset P$, then there is such a function $h$ with $h(p,s)=1$ for all $p\in Q$ and $s\in [0,1]$.

Furthermore, the function $h$ can be chosen such that $|\Re(h)-1|<\sigma$ in $P\times[0,1]$ for any given number $\sigma>0$; in particular, we can choose $h$ so that $\Re(h)>0$.
\end{corollary}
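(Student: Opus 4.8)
\emph{Proof plan.}
The plan is to write $h=1+g$ and split the correction $g$ into two pieces with disjoint supports inside $(0,1)$: a possibly large but \emph{purely imaginary} piece supported on $J'$ that solves the integral equation only \emph{approximately}, and a \emph{small} genuinely complex piece supported on $J$ that corrects the residual error \emph{exactly}. A purely imaginary summand does not affect $\Re h$, and a small summand keeps $\Re h$ near $1$; in particular $\Re h>0$, so $h$ never vanishes. First I would pass to a vanishing target: put $\beta(p)=\alpha(p)-\int_0^1 f(p,s)\,ds$, a continuous map $P\to\C^n$ with $\beta|_Q\equiv 0$ precisely under the extra hypothesis. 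It then suffices to build a continuous $g:P\times[0,1]\to\C$ with $g(p,0)=g(p,1)=0$, with $\int_0^1 g(p,s)f(p,s)\,ds=\beta(p)$ for all $p$, with $|\Re g|<\sigma$ on $P\times[0,1]$ (we may assume $\sigma\le 1$, so that $1+g$ avoids $0$), and with $g(p,\cdot)\equiv 0$ on $Q$ when $\beta|_Q\equiv 0$; then $h:=1+g$ is as required.

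\emph{The bulk step, on $J'$.}
Fix an affine homeomorphism $\psi:[0,1]\to J'$ and identify $\C^n\cong\R^{2n}$ as real vector spaces. Then $\R$-fullness of $f_p$ on $J'$ says exactly that the rescaled, real-ified path $\tau\mapsto f(p,\psi(\tau))$ is $\R$-full in the sense of Lemma~\ref{lem:R-full-k}, for every $p\in P$. Applying that lemma (with $2n$ in place of $n$) to target $-\imath\beta(p)$ and a small constant $\epsilon'>0$ to be fixed below, and undoing the rescaling, yields a continuous $x:P\times[0,1]\to\R$ supported in $J'$, vanishing at the endpoints of $J'$, with $x(p,\cdot)\equiv 0$ on $Q$ whenever $\beta|_Q\equiv 0$, and such that
\[
	\Bigl|\int_0^1 \imath\,x(p,s)\,f(p,s)\,ds-\beta(p)\Bigr|<\epsilon'\qquad\text{for all }p\in P.
\]
Set $\delta(p)=\beta(p)-\int_0^1 \imath\,x(p,s)\,f(p,s)\,ds$; then $\delta:P\to\C^n$ is continuous, $|\delta(p)|<\epsilon'$, and $\delta|_Q\equiv 0$ when $\beta|_Q\equiv 0$.

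\emph{The correction step, on $J$, and assembly.}
Consider the bounded linear operators $T_p:\Cscr_0(J,\C)\to\C^n$, $T_p(c)=\int_J c(s)\,f(p,s)\,ds$, where $\Cscr_0(J,\C)$ is the space of continuous functions on $J$ vanishing at its endpoints; $\C$-fullness of $f_p$ on $J$ makes each $T_p$ surjective. Using compactness of $P$, I would build a continuous family of \emph{linear} right inverses $R_p$ of $T_p$ with $C:=\sup_{p\in P}\|R_p\|<\infty$: near a given $p_0\in P$, choose narrow real bump functions $e_1,\dots,e_n$ on pairwise disjoint closed subintervals of $\mathring J$ with $T_{p_0}(e_1),\dots,T_{p_0}(e_n)$ a basis of $\C^n$ (possible by $\C$-fullness); then $(T_p(e_i))_i$ is still a basis for $p$ near $p_0$, and expressing $v\in\C^n$ in it yields a continuous local right inverse; finally patch finitely many of these with a partition of unity on $P$, noting that a convex combination of right inverses of a fixed surjection is again a right inverse. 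Put $c_p:=R_p(\delta(p))$, extended by $0$ off $J$; then $\int_J c_p(s)\,f(p,s)\,ds=\delta(p)$, $\|c_p\|_\infty\le C\epsilon'$, $c_p\equiv 0$ on $Q$ by linearity, and $(p,s)\mapsto c_p(s)$ is jointly continuous. Now set $g=\imath\,x+c$. Since $J\cap J'=\varnothing$ and $J\cup J'\subset(0,1)$, the two summands have disjoint supports contained in $(0,1)$, so $g(p,0)=g(p,1)=0$ and
\[
	\int_0^1\bigl(1+g(p,s)\bigr)f(p,s)\,ds=\int_0^1 f(p,s)\,ds+\bigl(\beta(p)-\delta(p)\bigr)+\delta(p)=\alpha(p);
\]
moreover $\Re(1+g)=1+\Re c$ with $|\Re c|\le C\epsilon'$, so choosing $\epsilon'<\min\{\sigma,1\}/C$ gives $|\Re h-1|<\sigma$ and $\Re h>0$. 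Hence $h:=1+g$ maps $P\times[0,1]$ into $\C^*$, satisfies $h(p,0)=h(p,1)=1$ for all $p$, and equals $1$ on $Q$ under the extra hypothesis.

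\emph{Main obstacle.}
The technical heart is the correction step: producing right inverses $R_p$ that depend continuously on the parameter, are uniformly bounded, and above all are \emph{linear} — linearity is what forces $R_p(0)=0$, hence $c_p\equiv 0$ over $Q$, which is exactly what preserves the relative (boundary) conclusion. Conceptually, assuming both a $\C$-full interval $J$ and an $\R$-full interval $J'$ is precisely what makes the split possible: $\R$-fullness is what Lemma~\ref{lem:R-full-k} consumes, and its output, though only approximate, may be taken purely imaginary and hence neutral for $\Re h$, while the disjoint $\C$-full interval $J$ furnishes the small exact correction that turns ``approximate'' into ``exact'' without disturbing the control on $\Re h$.
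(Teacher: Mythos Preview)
Your proof is correct and follows the same overall architecture as the paper: reduce to a target $\beta(p)=\alpha(p)-\int_0^1 f_p$, use Lemma~\ref{lem:R-full-k} on $J'$ to get a purely imaginary bulk correction $\imath x$ that hits $\beta$ approximately without touching $\Re h$, and then make a small exact correction supported on $J$ using $\C$-fullness there.

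The difference lies in how the exact correction on $J$ is carried out. The paper invokes \cite[Lemma~2.1]{AlarconForstnericLopez2019JGEA} to build a period-dominating multiplicative spray $w(\zeta,s)=\prod_i(1+\zeta_i g_i(s))$ supported on $J$, and then uses the implicit function theorem to find a continuous $\zeta_\alpha:P\to U$ (with $U$ a small ball, so $|\Re w-1|<\sigma$) solving the period equation exactly; the final $h$ is the \emph{product} $w(\zeta_\alpha(p),\cdot)\,(1+\imath x(p,\cdot))$. You instead build a continuous family of \emph{linear} right inverses $R_p$ to the period operators $T_p$ by choosing local bump-function bases and gluing with a partition of unity on $P$, exploiting that a convex combination of right inverses of a fixed linear surjection is again a right inverse; your final $h$ is the \emph{sum} $1+\imath x+R_p(\delta(p))$. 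Your route is more elementary and self-contained (no implicit function theorem, no external spray lemma), and the linearity of $R_p$ gives the relative conclusion over $Q$ for free via $R_p(0)=0$. The paper's route, on the other hand, plugs directly into the period-domination machinery already developed for the Gauss-map theory, which is natural in context and yields a multiplicative $h$ that is manifestly $\C^*$-valued factor by factor. Both approaches give the same control $|\Re h-1|<\sigma$ because the supports on $J$ and $J'$ are disjoint.
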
 

The final claim of the corollary is not needed here, but is included for possible future applications.

%
%
\begin{proof}
Consider the period map $\mathscr{P}:\mathscr{C}([0,1],\C^n)\to\C^n$ given by
\[
	\mathscr{P}(g)=\int_0^1 g(s)\, ds\in\C^n,\quad g\in \mathscr{C}([0,1],\C^n).
\]
By \cite[Lemma 2.1]{AlarconForstnericLopez2019JGEA}, there are continuous functions $g_1,\ldots,g_N:[0,1]\to\C$ $(N\ge n)$, supported on $J$, such that the function $w:\C^N\times[0,1]\to\C$ given by
\[
	w(\zeta,s):=\prod_{i=1}^N \big(1+\zeta_i g_i(s)\big),
	\quad \zeta=(\zeta_1,\ldots,\zeta_N)\in\C^N,\; s\in [0,1],
\]
has
\[
	\frac{\partial}{\partial\zeta} \mathscr{P}\big(w(\zeta,\cdot) f(p,\cdot)\big)\big|_{\zeta=0}: T_0\C^N\cong\C^N\to\C^n
	\quad\text{surjective for every }p\in P.
\]
(Here we need the maps $f_p$ to be full.) In particular,
\begin{equation}\label{eq:w(p,)}
	w(\cdot,s)=1\quad \text{ for all } s\in [0,1]\setminus J\supset\{0,1\}.
\end{equation}
 By virtue of the implicit function theorem, this implies that for every neighbourhood $U$ of $0$ in $\C^N$, there is a number $\epsilon>0$ with the following property: if $\beta: P\to\C^n$ is a continuous map with
$| \mathscr{P}(f(p,\cdot)) - \beta(p) | <\epsilon$ for all $p\in P$, then there is a continuous map $\zeta_\beta:P\to U$ such that
\[
	\mathscr{P}\big(w(\zeta_\beta(p),\cdot) f(p,\cdot)\big) = \beta(p)\quad \text{for all }p\in P.
\]
Furthermore, if $\mathscr{P}(f(p,\cdot)) = \beta(p)$ for all $p\in Q$, then we can choose the map $\zeta_\beta$ such that $\zeta_\beta(p)=0$ (and hence $w(\zeta_\beta(p),\cdot)=1$) for all $p\in Q$.
Fix a number $0<\sigma<1$, let $U$ be a neighbourhood of $0\in\C^N$ so small that 
\begin{equation}\label{eq:Rew-1}
	|\Re(w)-1|<\sigma\quad \text{in }U\times[0,1]
\end{equation}
(hence $\Re(w)>0$ there), and let $\epsilon>0$ be a number satisfying the above condition.

Consider the continuous map $\gamma: P\to\C^n$ given by 
\begin{equation}\label{eq:gamma(p)}
	\gamma(p)=-\imath\big(\alpha(p)-\mathscr{P}(f(p,\cdot))\big),\quad p\in P.
\end{equation}
Lemma \ref{lem:R-full-k} furnishes us with a continuous function $x:P\times [0,1]\to\R$, supported on $P\times J'$, such that  
\[
	\big| \mathscr{P}\big(x(p,\cdot)f(p,\cdot)\big) - \gamma(p) \big| < \epsilon \quad \text{for all }p\in P,
\]
and if $\gamma(p)=0$ for all $p\in Q$, then $x(p,\cdot)=0$ for all $p\in Q$. It turns out that the continuous function $\tilde h=1+\imath x\colon P\times[0,1]\to 1+\imath\R\subset\C^*$ satisfies 
\begin{equation}\label{eqtildeh=1}
	\tilde h=1\quad \text{in }P\times ([0,1]\setminus J')\supset P\times (J\cup\{0,1\})
\end{equation} 
and, in view of \eqref{eq:gamma(p)},
\[
	\big| \mathscr{P}\big(\tilde h(p,\cdot)f(p,\cdot)\big) - \alpha(p) \big| < \epsilon
	 \quad \text{for all }p\in P.
\]
Moreover, if $\mathscr{P}(f(p,\cdot))=\alpha(p)$ for all $p\in Q$, then $\tilde h(p,\cdot)=1$ for all $p\in Q$. Since $\tilde hf=f$ in $P\times J$ and the functions $g_1,\ldots,g_N$ are supported on $J$, the period dominating property of $w$ provides a continuous map $\zeta_\alpha:P\to U$ such that
\begin{equation}\label{eq:period-h}
	\mathscr{P}\big(w(\zeta_\alpha(p),\cdot) \tilde h(p,\cdot) f(p,\cdot)\big) = \alpha(p)\quad \text{for all }p\in P,
\end{equation}
and if $\mathscr{P}(f(p,\cdot))=\alpha(p)$ for all $p\in Q$, then $\zeta_\alpha(p)=0$ for all $p\in Q$. The continuous function $h:P\times [0,1]\to\C^*$ given by $h(p,\cdot)=w(\zeta_\alpha(p),\cdot) \tilde h(p,\cdot)$ satisfies the conclusion of the corollary. Indeed, conditions \eqref{eq:w(p,)} and \eqref{eqtildeh=1} ensure that $h(\cdot,0)=h(\cdot,1)=1$. By \eqref{eq:period-h}, we have
\[
	\int_0^1 h(p,s)f(p,s)\, ds = \mathscr{P}\big(w(\zeta_\alpha(p),\cdot) \tilde h(p,\cdot) f(p,\cdot)\big) = \alpha(p)\quad \text{for all }p\in P.
\]
Moreover, if $\alpha(p)=\int_0^1 f(p,s)\, ds$ $\big(=\mathscr{P}(f(p,\cdot))\big)$ for all $p\in Q$, then it is ensured that $\tilde h(p,\cdot)=1$ and $\zeta_\alpha(p)=0$ for all $p\in Q$, and hence $h(p,\cdot)=1$ for all $p\in Q$. Finally, \eqref{eq:w(p,)}, \eqref{eq:Rew-1}, \eqref{eqtildeh=1}, and the facts that $\zeta_\alpha(P)\subset U$ and $\Re(\tilde h)=1$ in $P\times[0,1]$ guarantee that $|\Re(h)-1|<\sigma$ in $P\times[0,1]$.
\end{proof}

%
%
\begin{proof}[Proof of Lemma \ref{lem:critical-case}]
Let $v$ denote the only critical point of $\rho$ in $L\setminus \mathring K$, and note that $v\in \mathring L\setminus K$. We distinguish cases depending on the Morse index of $\rho$ at $v$.

Assume first that the Morse index of $\rho$ at $v$ equals $0$. We then proceed as in the proof of \cite[Lemma 4.3]{AlarconForstnericLopez2019JGEA}. In this case a new connected component of the sublevel set $\{\rho\le s\}$ appears when $s$ passes the value $\rho(v)$, and hence a new connected and simply connected component $K'$ of $L$ appears. In particular, $K$ is a strong deformation retract of $L\setminus K'$. Thus, Lemma \ref{lem:noncritical-case} provides a continuous family of functions $\widetilde\varphi_p:L\setminus K'\to\C^*$ $(p\in[0,1]^k)$ of class $\mathscr{A}(L\setminus K')$ which is as close to the family $\varphi_p$ as desired in $[0,1]^k\times K$ and has $(\widetilde\varphi_p-\varphi_p)f_p\theta$ exact on $K$ for all $p\in [0,1]^k$. Moreover, we can choose $\widetilde\varphi_p=\varphi_p$ for all $p\in[0,1]^{k-1}\times\{0\}$ provided that $\varphi_p$ is of class $\mathscr{A}(L)$ and nowhere vanishing on $L$ for every $p\in[0,1]^{k-1}\times\{0\}$.  (Here we need the maps $f_p$ to be full.)  To finish it suffices to extend the family $\widetilde \varphi_p$ $(p\in [0,1]^k)$ to $K'$ as a continuous family of nowhere vanishing functions of class $\mathscr{A}(K')$, choosing $\widetilde \varphi_p|_{K'}=\varphi_p|_{K'}$ for all $p\in[0,1]^{k-1}\times\{0\}$ if for every $p\in[0,1]^{k-1}\times\{0\}$ the map $\varphi_p$ is of class $\mathscr{A}(L)$, vanishes nowhere on $L$, and satisfies $\int_C \varphi_pf_p\theta = F_p([C])$ for all closed curves $C\subset L$.

Assume that, on the contrary, the Morse index of $\rho$ at $v$ equals $1$. In this case there is a smooth Jordan arc $\gamma$ in $\mathring L\setminus \mathring K$, with its two endpoints in $bK$ and otherwise disjoint from $K$, such that $S=K\cup\gamma\subset\mathring L$ is an admissible subset of $M$ and a strong deformation retract of $L$. Denote by $\Omega$ the connected component of $\mathring L\setminus K$ intersecting $\gamma$. Note that $\Omega$ contains $\gamma$ except for its endpoints.

Fix $p\in [0,1]^k$. We claim that 
\begin{equation}\label{eq:f_pR-full}
	\text{the real span of $f_p(\Omega)$ equals $\C^n=\R^{2n}$}. 
\end{equation}
Indeed, since the map $f_p\colon M\to \C^n$ is holomorphic and full, $f_p|_\Omega:\Omega\to\C^n$ is full as well, and hence if $\lambda:\C^n\to\C$ is a $\C$-linear functional and $\lambda\circ f_p|_\Omega=0$, then $\lambda=0$.  Let $\mu:\C^n\to\R$ be an $\R$-linear functional with $\mu\circ f_p|_\Omega=0$.  Now, $\mu$ is the real part of a $\C$-linear functional $\lambda:\C^n\to\C$.  Since $f$ is holomorphic and $\Re\lambda\circ f_p|_\Omega=0$, we have $\lambda\circ f_p|_\Omega=0$, so $\lambda=0$ and hence $\mu=0$. This guarantees \eqref{eq:f_pR-full}, and thus there are points $x_1,\ldots,x_{2n}\in \Omega$ such that 
\[
	{\rm span}_\R\{f_p(x_1),\ldots,f_p(x_{2n})\}=\C^n.
\]
(Here, again, we need the maps $f_p$ to be full.)  Since $f_p$ depends continuously on $p\in [0,1]^k$, the same holds if we replace $p$ by any point $p'$ in a small neighbourhood of $p$ in $[0,1]^k$. Thus, compactness of $[0,1]^k$ ensures the existence of a finite set $\Lambda\subset \Omega\subset \mathring L\setminus K$ such that
\begin{equation}\label{eq:LambdaR-full}
	{\rm span}_\R\{f_p(x)\colon x\in\Lambda\}=\C^n
	\quad\text{for all }p\in [0,1]^k. 
\end{equation}
Since $\Omega$ is connected, we may deform the arc $\gamma$ outside a neighbourhood of its endpoints so that $\Lambda\subset\gamma$. Since $\Lambda\cap K=\varnothing$, we have that $\Lambda$ lies in the relative interior of $\gamma$, and since $\Lambda$ is finite there is a (closed) sub-arc $\gamma'$ in the relative interior of $\gamma$ such that $\Lambda\subset\gamma'$. Condition \eqref{eq:LambdaR-full} then ensures that the map $f_p$ is $\R$-full on $\gamma'$ for all $p\in [0,1]^k$. On the other hand, the fullness of the holomorphic map $f_p:M\to\C^n$ implies that $f_p$ is $\C$-full on every sub-arc of $\gamma$ for all $p\in [0,1]^k$. (Here, once again, we need the maps $f_p$ to be full.)  Corollary \ref{co:R-full-k} then enables us to extend the family of functions $\varphi_p:K\to\C^*$ to a continuous family of functions $\phi_p:S=K\cup\gamma\to\C^*$ $(p\in [0,1]^k)$ of class $\mathscr{A}(S)$ such that
\[
	\int_C \phi_pf_p\theta = F_p([C])\quad
	\text{for all closed curves $C\subset S$ and all }p\in[0,1]^k.
\]
Furthermore, if $\varphi_p$ is of class $\mathscr{A}(L)$, vanishes nowhere on $L$, and satisfies
\[
	\int_C \varphi_pf_p\theta = F_p([C])\quad
	\text{for all closed curves $C\subset L$, for all }p\in[0,1]^{k-1}\times\{0\},
\]
then we can choose $\phi_p=\varphi_p|_S$ for all $p\in [0,1]^{k-1}\times\{0\}$. Since $S$ is a strong deformation retract of $L$, this reduces the proof to the noncritical case granted in Lemma \ref{lem:noncritical-case}.
\end{proof}

Theorem \ref{th:main-technical-theorem} is thus proved.


\section{Proofs}
\label{sec:proofs}

\noindent
In this section, we prove the two main Theorems \ref{th:fibration} and \ref{th:inclusion}.

\begin{proof}[Proof of Theorem \ref{th:fibration}]
Fix an integer $k\ge 1$ and let $P=[0,1]^{k-1}$.  For $k=1$, we identify $P$ and $P\times\{0\}$ with $\{0\}\subset [0,1]$ and $P\times[0,1]$ with $[0,1]$.  Assume that we have continuous maps $u:P\times\{0\}\to \CMIf(M,\R^n)$, $G:P\times[0,1]\to \Of(M,\bold Q^{n-2})$, and $F:P\times[0,1]\to H^1(M,\R^n)$
such that 
\[
	G|_{P\times\{0\}} = \Gscr\circ u
	\quad\text{and}\quad
	F|_{P\times\{0\}} = \Flux\circ u;
\]
that is, the following square commutes.
\begin{equation}\label{eq:commuting-square} 
	\xymatrix{
	P\times\{0\} \ar@{^{(}->}[dd] \ar[rr]^{\!\!\!\!u} & & \CMIf(M,\R^n) \ar[dd]^{(\Gscr,\Flux)}
		\\ \\
	P\times[0,1] \ar[rr]^{ \!\!\!\!\!\!\!\!\!\!\!\!\!\!\!\!\!\!\!\!\!\!\!\!\!\!\!\!\!\!\!\!\!\! (G,F)} \ar@{-->}[uurr]^{\phi} & & \Of(M,\bold Q^{n-2})\times H^1(M,\R^n)
	} 
\end{equation}
To complete the proof it suffices to show that the map $(G,F)$ lifts to a continuous map $\phi:P\times[0,1]\to \CMIf(M,\R^n)$ such that
\begin{equation}\label{eq:phi}
	u = \phi|_{P\times\{0\}},\quad 
	G = \Gscr\circ \phi,
	\quad\text{and}\quad
	F = \Flux \circ \phi;
\end{equation}
that is, the two triangles in \eqref{eq:commuting-square} commute.  Write 
\[
	u(p,0)=u_p^0,\quad G(p,t)=G_p^t,
	\quad\text{and}\quad
	F(p,t)=F_p^t\quad \text{for all } (p,t)\in P\times [0,1].
\]
Since the square \eqref{eq:commuting-square} commutes, we have 
\begin{equation}\label{eq:Gp0-Fp0}
	\Gscr(u_p^0)=G_p^0 \quad\text{and}\quad
	\Flux(u_p^0)=F_p^0 \quad\text{for all } p\in P.
\end{equation}

Fix a holomorphic $1$-form $\theta$ vanishing nowhere on $M$. The Gauss map assignment $\Gscr:\CMIf(M,\R^n)\to \Of(M,\bold Q^{n-2})$ is naturally factored as
\[ 
\xymatrix{
\CMIf(M,\R^n) \ar[r]^{\,\,\,\psi} & \Of(M, \boldA_*) \ar[r]^{\!\!\!\!\pi_*} & \Of(M,\bold Q^{n-2}),
} 
\]
where $\psi(u)=2\partial u/\theta$ and $\pi:\boldA_*=\{z\in \C_*^n\colon z_1^2+\cdots+z_n^2=0\}\to \bold Q^{n-2}$ is the restriction of the canonical projection $\pi:\C^n_*=\C^n\setminus\{0\}\to\C\P^{n-1}$. Here, $\Of(M, \boldA_*)$ denotes the space of all holomorphic maps $f:M\to\boldA_*\subset\C^n$ that are {\em full\,} in the sense that the $\C$-linear span of $f(M)$ is all of $\C^n$. Recall that every holomorphic map $g\colon M\to \C\P^{n-1}$ lifts to a holomorphic map $f:M\to\C^n_*$ such that $g=\pi\circ f=\pi_*(f)$; moreover, $g$ is full if and only if $f$ is full, while $g(M)\subset \bold Q^{n-2}$ if and only if $f(M)\subset \boldA_*$.  Also recall that a conformal minimal immersion $u\in\CMI(M,\R^n)$ is recovered from $\psi(u)$ by the integral formula
\[
	u(x)=u(x_0)+\Re\int_{x_0}^x\psi(u) \theta,\quad x\in M,
\]
for any base point $x_0\in M$, while 
\[
	\Flux(u)([C])=\Im\int_C\psi(u)\theta,\quad [C]\in H_1(M,\Z).
\]

Since $\Gscr=\pi_*\circ\psi$, the first condition in \eqref{eq:Gp0-Fp0} ensures that the following square of continuous maps commutes.
\begin{equation}\label{eq:commuting-square-2} 
	\xymatrix{
	P\times\{0\} \ar@{^{(}->}[dd] \ar[rr]^{\psi\circ u} & & \Of(M, \boldA_*) \ar[dd]^{\pi_*}
		\\ \\
	P\times[0,1] \ar[rr]^{\!\!\!\!\!\!\!\!\!\!\! G} \ar@{-->}[uurr]^{\varrho} & & \Of(M,\bold Q^{n-2})
	} 
\end{equation}

Since $\pi_*: \Of(M, \boldA_*) \to \Of(M,\bold Q^{n-2})$ is a fibration (see \cite[Lemma 5.1]{AlarconForstnericLopez2019JGEA}), the map $G$ lifts to a continuous map $\varrho:P\times[0,1]\to  \Of(M, \boldA_*)$ such that the two triangles in \eqref{eq:commuting-square-2}  commute:
\begin{equation}\label{eq:varrho}
	\psi\circ u = \varrho|_{P\times\{0\}}
	\quad\text{and}\quad
	G = \pi_* \circ \varrho.
\end{equation}
Write $\varrho(p,t)=\varrho_p^t$ for all $(p,t)\in P\times [0,1]$. Theorem \ref{th:main-technical-theorem} provides a continuous family of holomorphic functions $\varphi_p^t:M\to\C^*$, $(p,t)\in P\times[0,1]$, such that
\begin{equation}\label{eq:taupt}
	\int_C \varphi_p^t \varrho_p^t\theta = \imath F_p^t([C])\in\imath\R^n
	\quad \text{for all closed curves $C\subset M$, $(p,t)\in P\times[0,1]$}.
\end{equation}
Furthermore, since $\varrho_p^0=2\partial u_p^0/\theta$ by the first condition in \eqref{eq:varrho} and $F_p^0=\Flux(u_p^0)$ by the second condition in \eqref{eq:Gp0-Fp0} for all $p\in P$, we have that 
\[
	\int_C \varrho_p^0\theta = \imath F_p^0([C])
	\quad \text{for all closed curves $C\subset M$ and all $p\in P$},
\]
so we can choose the family of maps $\varphi_p^t$ so that 
\begin{equation}\label{eq:taup0}
	\varphi_p^0=1\quad \text{for all }p\in P.
\end{equation}
Fix a point $x_0\in M$. It follows from \eqref{eq:taupt} that the map $\phi_p^t:M\to\R^n$ defined by
\[
	\phi_p^t(x)=u_p^0(x_0)+\Re\int_{x_0}^x \varphi_p^t \varrho_p^t\theta,\quad x\in M,
\]
is a well-defined full conformal minimal immersion (note that $\varphi_p^t \varrho_p^t:M\to \boldA_*\subset\C^n$ is holomorphic and full, and $\Re(\varphi_p^t \varrho_p^t\theta)$ is exact). We claim that the continuous map $\phi:P\times[0,1]\to \CMIf(M,\R^n)$ given by $\phi(p,t)=\phi_p^t$ for all $(p,t)\in P\times[0,1]$ satisfies the conditions in \eqref{eq:phi}. Indeed, in view of \eqref{eq:taup0} and the first condition in \eqref{eq:varrho}, we have
\[
	\phi_p^0(x) = u_p^0(x)+\Re\int_{x_0}^x\varrho_p^0\theta =
	u_p^0(x)+\Re\int_{x_0}^x 2\partial u_p^0 = u_p^0(x),\quad x\in M,\; p\in P,
\]
so $u=\phi|_{P\times\{0\}}$.  On the other hand,
\[
	\Gscr(\phi_p^t) = \pi_*(2\partial \phi_p^t/\theta) = \pi_*(\varphi_p^t\varrho_p^t) = \pi_*(\varrho_p^t) = G_p^t,\quad (p,t)\in P\times[0,1],
\]
where we have used that $\Gscr=\pi_*\circ\psi$, the fact that $\varphi_p^t$ takes values in $\C^*$, and the second condition in \eqref{eq:varrho}. Therefore, $G = \Gscr\circ \phi$. Finally, \eqref{eq:taupt} directly implies that $\Flux(\phi_p^t)=F_p^t$ for all $(p,t)\in P\times[0,1]$, that is, $F = \Flux \circ \phi$.
\end{proof}

We now turn to the proof of Theorem \ref{th:inclusion}.  First we need the following h-principle, which easily implies Corollary \ref{co:dense}.

%
%
\begin{theorem}\label{th:h-principle}
Let $M$ be an open Riemann surface, $n\geq 3$, $Q$ be a closed subset of a contractible finite CW-complex $P$, and $G\colon M\times P\to \bold Q^{n-2}$ be a continuous map such that $G_p:=G(\cdot,p)\in \Of(M,\bold Q^{n-2})$ for all $p\in P$.  For any $\Oscr(M)$-convex compact set $K\subset M$ and any $\epsilon>0$ there is a homotopy $G^t\colon M\times P\to \bold Q^{n-2}$, $t\in[0,1]$, satisfying the following conditions.
\begin{enumerate}[\rm (i)]
\item  $G_p^t:=G^t(\cdot,p)\colon M\to \bold Q^{n-2}$ lies in $\Of(M, \bold Q^{n-2})$ for all $(p,t)\in P\times[0,1]$.
\smallskip
\item  $G_p^t=G_p$ for all $(p,t)\in (P\times\{0\})\cup(Q\times[0,1])$.
\smallskip
\item $|G_p^t(x)-G_p(x)|<\epsilon$ for all $x\in K$ and $(p,t)\in P\times[0,1]$.
\smallskip
\item  $G_p^t\in \Ofc(M, \bold Q^{n-2})$ for all $(p,t)\in (P\setminus Q)\times (0,1]$.
\end{enumerate}
In particular, if in addition $G_p\in \Ofc(M,\bold Q^{n-2})$ for all $p\in Q$, then we have $G_p^t\in \Ofc(M, \bold Q^{n-2})$ for all $(p,t)\in P\times (0,1]$.
\end{theorem}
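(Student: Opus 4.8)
The plan is to factor the homotopy lifting problem for $\Gscr$ through the map $\psi$ to the space $\Of(M,\boldA_*)$ of full holomorphic maps into the punctured cone, exactly as in the proof of Theorem \ref{th:fibration}, and then glue in the parametric h-principle for complete full minimal surfaces from \cite{AlarconLarusson2021}. First I would fix a nowhere-vanishing holomorphic $1$-form $\theta$ on $M$. Since $P$ is contractible, the given family $G_p$ of full holomorphic maps $M\to\bold Q^{n-2}$, together with the trivial family of fluxes $F_p=0$, can be lifted: using that $\pi_*:\Of(M,\boldA_*)\to\Of(M,\bold Q^{n-2})$ is a fibration \cite[Lemma 5.1]{AlarconForstnericLopez2019JGEA} I would first lift $G:M\times P\to\bold Q^{n-2}$ to a continuous family of full holomorphic maps $f_p:M\to\boldA_*$ with $\pi_*\circ f_p=G_p$, and then invoke Theorem \ref{th:main-technical-theorem} (with an admissible set $S$ containing $K$, and with the trivial cohomology classes) to correct $f_p$ by a nowhere-vanishing holomorphic multiplier $\varphi_p$ so that $\varphi_pf_p\theta$ has vanishing periods. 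Integrating produces a continuous family $u_p\in\CMIf(M,\R^n)$ with $\Gscr(u_p)=G_p$ and $\Flux(u_p)=0$, and if $G_p\in\Ofc(M,\bold Q^{n-2})$ for $p$ in the closed set $Q$, we can arrange (as in Theorem \ref{th:fibration}, choosing a complete representative over $Q$ and using the last clause of Theorem \ref{th:main-technical-theorem}) that $u_p$ is complete for all $p\in Q$.

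Next I would apply the parametric h-principle with approximation and interpolation for complete full conformal minimal immersions, \cite[Theorem 6.1]{AlarconLarusson2021} (in the form controlling the flux and fixing the immersion over the subcomplex $Q$): starting from the family $u_p\in\CMIf(M,\R^n)$ just constructed, it yields a homotopy $u_p^t$, $t\in[0,1]$, of full conformal minimal immersions with $u_p^0=u_p$, with $u_p^t=u_p$ for $p\in Q$ and for $t=0$, with $u_p^t$ approximating $u_p$ uniformly on $K$ to within a prescribed error, and with $u_p^t$ complete for all $(p,t)\in(P\setminus Q)\times(0,1]$ (and for all $(p,t)\in P\times(0,1]$ when $G|_Q$ already lies in $\Ofc$). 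Composing with the Gauss map assignment, I would set $G_p^t=\Gscr(u_p^t)$. Then (i) holds because each $u_p^t$ is full; (ii) holds because $u_p^t=u_p$ on $(P\times\{0\})\cup(Q\times[0,1])$ and $\Gscr(u_p)=G_p$ there; (iv) and the final clause hold because $\Gscr$ carries complete full immersions into $\Ofc(M,\bold Q^{n-2})$ by definition of the latter; and (iii), the $\epsilon$-closeness of $G_p^t$ to $G_p$ on $K$, follows from the $C^0$ (indeed $C^1$) approximation of $u_p^t$ to $u_p$ on $K$ together with continuity of the formula $\Gscr(u)=\pi_*(2\partial u/\theta)$ — one only needs to take the approximation in \cite[Theorem 6.1]{AlarconLarusson2021} fine enough on a slightly larger $\Oscr(M)$-convex compact neighbourhood of $K$, using that $\di u$ controls $\Gscr(u)$ locally and that $2\di u/\theta$ stays in $\boldA_*$.

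The main obstacle is the bookkeeping of which structure must be preserved over $Q$ at each stage and ensuring the two inputs — Theorem \ref{th:main-technical-theorem} and \cite[Theorem 6.1]{AlarconLarusson2021} — are applied with compatible "fixed over $Q$, fixed at $t=0$" conventions so that condition (ii) survives both steps; in particular one must be careful that the initial lift $u_p$ already agrees with a complete immersion over $Q$ before feeding it to the h-principle, which forces the order above (lift and correct periods first, arrange completeness over $Q$, then deform). A secondary technical point is translating the $\epsilon$-control: the h-principle gives approximation of immersions, not of Gauss maps, so I would record at the outset a uniform bound, valid for the relevant compact family of immersions, of the form $|\Gscr(u)(x)-\Gscr(u')(x)|\le C\,|\di u(x)-\di u'(x)|$ on $K$ (with $C$ depending on a lower bound for $|2\di u/\theta|$, i.e. on the fixed family $f_p$ being valued in $\boldA_*$), and then shrink the h-principle's approximation parameter accordingly. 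Everything else is a routine assembly of results already available in the excerpt.
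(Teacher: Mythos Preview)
Your overall strategy---lift $G$ to a family $u_p\in\CMIf(M,\R^n)$, apply the parametric h-principle of \cite[Theorem 6.1]{AlarconLarusson2021}, and push down by $\Gscr$---is exactly the paper's approach.  The paper streamlines your first four steps into one line: since $\Gscr$ is a Serre fibration (Theorem \ref{th:fibration}) and $P$ is contractible, the lift $u$ exists immediately, without re-running the $\pi_*$/Theorem \ref{th:main-technical-theorem} machinery.  Your treatment of (iii), passing from $C^0$-approximation of $u$ on a slightly larger compact to control of $\Gscr(u)$ on $K$ via Cauchy estimates, is correct and slightly more explicit than the paper's ``with $\delta$ small enough''.

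There is, however, a genuine gap in your argument for the final ``In particular'' clause.  You claim that when $G_p\in\Ofc$ for $p\in Q$ one can \emph{arrange $u_p$ complete for all $p\in Q$} by ``choosing a complete representative over $Q$''.  This asks for a continuous lift of the map $Q\to\Ofc(M,\bold Q^{n-2})$ through $\Gscr^c\colon\CMIfc(M,\R^n)\to\Ofc(M,\bold Q^{n-2})$.  But $Q$ need not be contractible, and whether $\Gscr^c$ is a fibration is precisely the open question flagged in the Remark following Corollary \ref{co:dense}; neither Theorem \ref{th:fibration} nor the last clause of Theorem \ref{th:main-technical-theorem} supplies such a continuous choice.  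The paper sidesteps this entirely: it lifts $G$ to an arbitrary (not necessarily complete over $Q$) family $u_p$, and then observes that the final clause is immediate from (ii) and (iv) alone---for $p\in Q$ and $t\in(0,1]$ one has $G_p^t=G_p\in\Ofc$ by (ii) and the hypothesis, while (iv) covers $(P\setminus Q)\times(0,1]$.  Replace your step 5 by this observation and the proof goes through.
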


%
%
\begin{proof}
Since $\Gscr:\CMIf(M, \R^n) \to \Of(M,\bold Q^{n-2})$ is a fibration by Theorem \ref{th:fibration} and $P$ is contractible, we can lift the given map $G\colon M\times P\to \bold Q^{n-2}$ by $\Gscr$ to a map $u\colon M\times P\to\R^n$ such that $u_p=u(\cdot,p) \in\CMIf(M,\R^n)$ for all $p\in P$. By the parametric h-principle \cite[Theorem 6.1(a)]{AlarconLarusson2021}, for any $\delta>0$, there is a homotopy $u^t\colon M\times P\to \R^n$, $t\in[0,1]$, with the following properties.
\begin{enumerate}[\rm (a)]
\item  $u_p^t:=u^t(\cdot,p)\colon M\to \R^n$ lies in $\CMIf(M, \R^n)$ for all $(p,t)\in P\times[0,1]$.
\smallskip
\item  $u_p^t=u_p$ for all $(p,t)\in (P\times\{0\})\cup(Q\times[0,1])$.
\smallskip
\item  $|u_p^t(x)-u_p(x)|<\delta$ for all $x\in K$ and $(p,t)\in P\times[0,1]$.
\smallskip
\item  $u_p^t\in \CMIfc(M, \R^n)$ for all $p\in (P\setminus Q)\times (0,1]$.
\end{enumerate}
Setting $G_p^t=\Gscr(u_p^t)$ with $\delta>0$ small enough defines a homotopy as desired.  Note in particular that if $G_p\in \Ofc(M,\bold Q^{n-2})$ for all $p\in Q$, then we cannot assert that $u_p$ is complete for $p\in Q$, but we have $\Gscr(u_p) = G_p\in\Ofc(M, \bold Q^{n-2})$ for all $p\in Q$, and hence {\rm (b)} and {\rm (d)} ensure that $G_p^t\in \Ofc(M, \bold Q^{n-2})$ for all $(p,t)\in P\times (0,1]$.
\end{proof}

\begin{proof}[Proof of Theorem \ref{th:inclusion}]
(a)  Applying Theorem \ref{th:h-principle} with $P$ a singleton and $Q$ empty shows that the inclusion $j:\Ofc(M,\bold Q^{n-2}) \hookrightarrow \Of(M,\bold Q^{n-2})$ induces a surjection of path components.  Applying Theorem \ref{th:h-principle} with $P$ a closed ball of dimension $k\geq 1$ and $Q$ the boundary sphere of $P$ shows that $j$ induces a monomorphism at the level of $\pi_{k-1}$ and an epimorphism at the level of $\pi_k$.

(b)  It suffices to show that the spaces $\Ofc(M,\bold Q^{n-2})$ and $\Of(M,\bold Q^{n-2})$ are ANRs.  First, $\Of(M,\bold Q^{n-2})$ is an ANR, being open in $\Oscr(M,\bold Q^{n-2})$, which is ANR by \cite[Theorem 9]{Larusson2015PAMS} since $\bold Q^{n-2}$ is an Oka manifold.  Second, since $\Of(M,\bold Q^{n-2})$ is an ANR, Theorem \ref{th:h-principle} and \cite[Proposition 5.2]{AlarconLarusson2021} imply that $\Ofc(M,\bold Q^{n-2})$ is an ANR.

(c)  It is not difficult to adapt the general position theorem \cite[Theorem 5.4]{ForstnericLarusson2019CAG} for maps into $\bold A_*$ to maps into $\bold Q^{n-2}$ using the fact that if $P$ is a contractible finite CW-complex, then every continuous map $P\to\Oscr(M, \bold Q^{n-2})$ lifts by the projection $\pi:\boldA_*\to\bold Q^{n-2}$, whose fibre $\C^*$ is Oka, to a continuous map $P\to\Oscr (M,\boldA_*)$, and conclude that the inclusion $\Of(M,\bold Q^{n-2})\hookrightarrow \Oscr(M,\bold Q^{n-2})$ is a weak homotopy equivalence.\footnote{For the hyperquadric $\boldA$ (as opposed to a more general cone $A$ as in \cite[Theorem 5.4]{ForstnericLarusson2019CAG}), the notion of nondegeneracy in \cite {AlarconForstneric2014IM} and \cite{ForstnericLarusson2019CAG} is equivalent to nonflatness by \cite[Lemma 2.3]{AlarconForstnericLopez2016MZ}.  To adapt \cite[Theorem 5.4]{ForstnericLarusson2019CAG} to full maps in place of nonflat maps, in its proof, simply invoke the proof of \cite[Theorem 3.1(a)]{AlarconForstnericLopez2016MZ} instead of the proof of \cite[Theorem 2.3(a)]{AlarconForstneric2014IM} (the latter theorem is incorrectly referred to as Theorem 3.2(a) in \cite{ForstnericLarusson2019CAG}).  Beware that fullness is called nondegeneracy in \cite{AlarconForstnericLopez2016MZ}.}  By the basic Oka principle, the inclusion $\Oscr(M,\bold Q^{n-2})\hookrightarrow \Cscr(M,\bold Q^{n-2})$ is also a weak homotopy equivalence.  Finally, if $M$ has finite topological type, then $\Cscr(M,\bold Q^{n-2})$ is an ANR (see \cite[Proposition 7]{Larusson2015PAMS} and the references in its proof).
\end{proof}


\section{The fibre of the Gauss map assignment}
\label{sec:fibre}

\noindent
As before, we let $M$ be an open Riemann surface and $n\geq 3$.  By Theorem \ref{th:fibration}, the Gauss map assignment $\Gscr: \CMIf(M,\R^n) \to \Of(M,\bold Q^{n-2})$ is a Serre fibration.  As shown already, $\Of(M,\bold Q^{n-2})$ has the weak homotopy type of $\Cscr(M,\bold Q^{n-2})$.  Moreover, $\bold Q^{n-2}$ is simply connected: for $n=3$, $\bold Q^{n-2}$ is isomorphic to the Riemann sphere; for $n\geq 4$, we invoke the fact that a smooth hypersurface in $\C\P^{n-1}$ is simply connected.  It follows that $\Of(M,\bold Q^{n-2})$ is path connected, so the fibres of $\Gscr: \CMIf(M,\R^n) \to \Of(M,\bold Q^{n-2})$ all have the same weak homotopy type.  In this section, we shall determine this homotopy type.  

Recall the factorisation 
\[ \xymatrix{
\CMIf(M,\R^n) \ar[r]^{\,\,\,\psi} & \Of(M, \boldA_*) \ar[r]^{\!\!\!\!\pi_*} & \Of(M,\bold Q^{n-2}),
} \]
of $\Gscr$ used in Section \ref{sec:proofs} in the proof of Theorem \ref{th:fibration}, where $\psi(u)=2\partial u/\theta$, with $\theta$ being a nowhere-vanishing holomorphic 1-form on $M$, and $\pi_*$ is induced by the projection $\pi$ from the punctured null quadric $\bold A_*$ in $\C_*^n$ onto the hyperquadric $\bold Q^{n-2}$ in $\C\P^{n-1}$.  The projection $\pi:\boldA_* \to \bold Q^{n-2}$ is a fibre bundle with fibre $\C^*$.  Note that $\Gscr$ and $\pi_*$ are canonically defined, whereas $\psi$ depends on the choice of $\theta$.  

In the proof of Theorem \ref{th:fibration}, we used the result that $\pi_*$ is a fibration \cite[Lemma 5.1]{AlarconForstnericLopez2019JGEA}.  In fact, the lemma shows that $\pi_*:\Oscr(M, \boldA_*) \to \Oscr(M,\bold Q^{n-2})$ is a fibration.  Its fibre $F_0$, well defined up to weak homotopy equivalence, is $\Oscr(M, \C^*)$ or, by the basic Oka principle, $\Cscr(M, \C^*)$.  Hence, $F_0$ has the weak homotopy type of a countably infinite disjoint union of circles, unless $M$ is the plane or the disc, in which case $F_0$ has the weak homotopy type of a circle.  We also need the result that $\psi$ is a weak homotopy equivalence.  This follows, by an argument similar to the proof of \cite[Theorem 5.6]{ForstnericLarusson2019CAG}, from the parametric h-principle \cite[Theorem 5.3]{ForstnericLarusson2019CAG} (the version with vanishing {\em real\,} periods) adapted to full maps in place of nonflat maps.

Let $F$ be the fibre of $\Gscr$ and consider the following commuting diagram.
\[ \xymatrix{
F \ \ar@{^{(}->}[r] \ar[d]^\psi & \CMIf(M,\R^n) \ar[r]^\Gscr \ar[d]^\psi & \Of(M,\bold Q^{n-2}) \ar@{=}[d] \\
F_0 \ \ar@{^{(}->}[r] &  \Of(M, \boldA_*) \ar[r]^{\!\!\!\!\pi_*} & \Of(M,\bold Q^{n-2})
} \]
The associated long exact sequences of homotopy groups show that $\psi$ induces a weak homotopy equivalence $F\to F_0$, so Theorem \ref{th:fibre} is proved.


\subsection*{Acknowledgements}
A.\ Alarc\'on was partially supported by the State Research Agency (AEI) 
via the grant no.\ PID2020-117868GB-I00, and the ``Maria de Maeztu'' Excellence Unit IMAG, reference CEX2020-001105-M, funded by MCIN/AEI/10.13039/501100011033/; and the Junta de Andaluc\'ia grant no. P18-FR-4049; Spain.


{\bibliographystyle{abbrv} \bibliography{references}}

\begin{thebibliography}{10}

\bibitem{AlarconForstneric2014IM}
A.~Alarc{\'o}n and F.~Forstneri\v{c}.
\newblock Null curves and directed immersions of open {R}iemann surfaces.
\newblock {\em Invent. Math.}, 196(3):733--771, 2014.

\bibitem{AlarconForstnericLopez2016MZ}
A.~Alarc{\'o}n, F.~Forstneri\v{c}, and F.~J. L{\'o}pez.
\newblock Embedded minimal surfaces in {$\mathbb {R}^n$}.
\newblock {\em Math. Z.}, 283(1-2):1--24, 2016.

\bibitem{AlarconForstnericLopez2019JGEA}
A.~Alarc{\'o}n, F.~Forstneri\v{c}, and F.~J. L{\'o}pez.
\newblock Every meromorphic function is the {G}auss map of a conformal minimal
  surface.
\newblock {\em J. Geom. Anal.}, 29(4):3011--3038, 2019.

\bibitem{AlarconForstnericLopez2021}
A.~Alarc\'{o}n, F.~Forstneri\v{c}, and F.~J. L\'{o}pez.
\newblock {\em Minimal surfaces from a complex analytic viewpoint}.
\newblock Springer Monographs in Mathematics. Springer, Cham, 2021.

\bibitem{AlarconLarusson2021}
A.~Alarc{\'o}n and F.~L{\'a}russon.
\newblock A strong parametric h-principle for complete minimal surfaces.
\newblock \url{https://arxiv.org/abs/2106.03495}.

\bibitem{AlarconLopez2012JDG}
A.~Alarc{\'o}n and F.~J. L{\'o}pez.
\newblock Minimal surfaces in {$\mathbb R^3$} properly projecting into
  {$\mathbb R^2$}.
\newblock {\em J. Differential Geom.}, 90(3):351--381, 2012.

\bibitem{Bonnet1860}
O.~Bonnet.
\newblock M\'{e}moire sur l'emploi d'un nouveau systeme de variables dans
  l'etude des surfaces courbes.
\newblock {\em J. Math. Pure Appl.}, 2:153--266, 1860.

\bibitem{Christoffel1867}
E.~B. Christoffel.
\newblock Ueber einige allgemeine {E}igenschaften der {M}inimumsfl\"{a}chen.
\newblock {\em J. Reine Angew. Math.}, 67:218--228, 1867.

\bibitem{Forstneric2017E}
F.~Forstneri\v{c}.
\newblock {\em Stein manifolds and holomorphic mappings (The homotopy principle
  in complex analysis)}, volume~56 of {\em Ergebnisse der Mathematik und ihrer
  Grenzgebiete. 3. Folge}.
\newblock Springer, Cham, second edition, 2017.

\bibitem{ForstnericLarusson2019CAG}
F.~Forstneri\v{c} and F.~L{\'a}russon.
\newblock {The parametric \(h\)-principle for minimal surfaces in
  \(\mathbb{R}^n\) and null curves in \(\mathbb{C}^n\).}
\newblock {\em {Commun. Anal. Geom.}}, 27(1):1--45, 2019.

\bibitem{Fujimoto1983JMSJ}
H.~Fujimoto.
\newblock On the {G}auss map of a complete minimal surface in {${\bf R}^{m}$}.
\newblock {\em J. Math. Soc. Japan}, 35(2):279--288, 1983.

\bibitem{Fujimoto1988SRKU}
H.~Fujimoto.
\newblock Examples of complete minimal surfaces in {$\bf R^m$} whose {G}auss
  maps omit {$m(m+1)/2$} hyperplanes in general position.
\newblock {\em Sci. Rep. Kanazawa Univ.}, 33(2):37--43, 1988.

\bibitem{Fujimoto1988JMSJ}
H.~Fujimoto.
\newblock On the number of exceptional values of the {G}auss maps of minimal
  surfaces.
\newblock {\em J. Math. Soc. Japan}, 40(2):235--247, 1988.

\bibitem{Fujimoto1990JDG}
H.~Fujimoto.
\newblock Modified defect relations for the {G}auss map of minimal surfaces.
  {II}.
\newblock {\em J. Differential Geom.}, 31(2):365--385, 1990.

\bibitem{HoffmanOsserman1980}
D.~A. Hoffman and R.~Osserman.
\newblock The geometry of the generalized {G}auss map.
\newblock {\em Mem. Amer. Math. Soc.}, 28(236):iii+105, 1980.

\bibitem{Larusson2015PAMS}
F.~L{\'a}russon.
\newblock Absolute neighbourhood retracts and spaces of holomorphic maps from
  {S}tein manifolds to {O}ka manifolds.
\newblock {\em Proc. Amer. Math. Soc.}, 143(3):1159--1167, 2015.

\bibitem{Osserman1963}
R.~Osserman.
\newblock On complete minimal surfaces.
\newblock {\em Arch. Rational Mech. Anal.}, 13:392--404, 1963.

\bibitem{Osserman1986}
R.~Osserman.
\newblock {\em A survey of minimal surfaces}.
\newblock Dover Publications, Inc., New York, second edition, 1986.

\bibitem{Ru1991JDG}
M.~Ru.
\newblock On the {G}auss map of minimal surfaces immersed in {${\bf R}^n$}.
\newblock {\em J. Differential Geom.}, 34(2):411--423, 1991.

\bibitem{SuLi2019}
M.~Su and Y.~Li.
\newblock Meromorphic functions and the {G}auss map of complete minimal
  surfaces.
\newblock {\em Houston J. Math.}, 45(3):721--729, 2019.

\bibitem{SuLi2019b}
M.~Su and Y.~H. Li.
\newblock Meromorphic functions on {$\Bbb C$} and the {G}auss map of complete
  minimal surfaces.
\newblock {\em Acta Math. Sinica (Chinese Ser.)}, 62(3):515--520, 2019.

\bibitem{WeitsmanXavier1987}
A.~Weitsman and F.~Xavier.
\newblock Some function theoretic properties of the {G}auss map for hyperbolic
  complete minimal surfaces.
\newblock {\em Michigan Math. J.}, 34(2):275--283, 1987.

\end{thebibliography}


\newpage
\noindent Antonio Alarc\'{o}n

\noindent Departamento de Geometr\'{\i}a y Topolog\'{\i}a e Instituto de Matem\'aticas (IMAG), Universidad de Granada, Campus de Fuentenueva s/n, E--18071 Granada, Spain

\noindent  e-mail: {\tt alarcon@ugr.es}

\bigskip
\noindent Finnur L\'arusson

\noindent School of Mathematical Sciences, University of Adelaide, Adelaide SA 5005, Australia

\noindent  e-mail: {\tt finnur.larusson@adelaide.edu.au}

\end{document}